\renewenvironment{proof}{{\noindent\bfseries Proof.}}{\qed}
\newtheorem{conjecture}{Conjecture}
\newtheorem{proposition}{Proposition}
\title{Partially Directed Snake Polyominoes}
\author{Alain Goupil,  Marie-Eve Pellerin, J\'er\^ome de Wouters d'Oplinter} %
\address{ D\'{e}partement de math\'{e}matiques et d'informatique, Universit\'{e} du Qu\'{e}bec \`{a} Trois-Rivi\`{e}res, Trois-Rivi\`{e}res (QC) Canada} 
\email{alain.goupil@uqtr.ca, Marie-Eve.Pellerin@uqtr.ca,jeromedewouters@gmail.com}
\begin{document}
\keywords{snake polyomino, enumeration, generating function, self-avoiding walk, functional equation, bargraph}

\maketitle
\begin{abstract}
The goal of this paper is to study the family of snake polyominoes. 
More precisely,  we focus our attention on the class of partially directed snakes. We establish functional equations and length generating functions of two dimensional, three dimensional and then $N$ dimensional  partially directed snake polyominoes.  We then turn our attention to  partially directed snakes inscribed in a $b\times k$ rectangle and we establish two-variable generating functions, with respect to  height $k$ and  length $n$ of the snakes.  We include observations on the relationship between snake polyominoes and self-avoiding walks.
We conclude with a discussion on inscribed snakes polyominoes of maximal length which lead us to the formulation of a conjecture encountered in the course of our investigations. 
\end{abstract}


\section{Introduction} 

A two dimensional ($2D$) polyomino is an edge-connected set of cells in a regular lattice up to translation. In this paper we are solely  interested with sets of square cells with sides of unit length in the square lattice. Since a polyomino can be seen as a simple graph with its  square cells as vertices and  its edge contacts between  adjacent squares as edges, we will  use the  vocabulary of graph theory when we find it convenient. 
Thus the degree of a cell in a polyomino is the number of its edge contacts with its neighbour cells. 

A {\em snake polyomino} of length $n$, shortly referred to as a snake,  is a polyomino with $n$ cells, no cycle and such that each cell has edge degree at most two. As a consequence a snake of length at least two has two cells of degree one.  The two cells of degree one, called the tail and the head, can be used to provide snakes with an orientation. An {\em oriented snake} of length $n\geq 2$ is a snake with  designated tail and  head cells and the orientation goes from tail to head. When  the tail and head cells are not specified in a snake, we say that the snake is {\em non-oriented}. A snake can always be given an orientation and we will call upon this property when needed but snakes are a priori  considered non-oriented. 

Oriented snake polyominoes are similar objects to self-avoiding walks (see~\cite{Bm}) but, to our knowledge, no bijective relation between the two sets has been established so far and we found no systematic investigation of snakes in the current literature. 

It is in fact quite straightforward to see that  there is an injective map $\phi$ from the set $OS(n)$ of oriented snakes of length $n$ to the set $SAW(n)$ of self-avoiding walks with $n$ vertices  by simply transforming cells of snakes into vertices of walks and drawing an edge between two vertices when the two corresponding cells are edge connected (Figure~\ref{inj}).  This means that the set $OS(n)$ is in bijection with a subset of $SAW(n)$.  This correspondence is not invertible because the minimum number of cells needed to form a $U$ shape in a snake is superior by one  to the minimum number of vertices in a $U$ shape of a $SAW$.  By a $U$ shape, we mean either a snake or a self-avoiding walk that goes in a given direction, say East, then moves  in a perpendicular direction, say North, then returns in the direction opposite to the initial one, here West, thus forming a $U$ shape. Of course the initial direction in a $U$ shape can be any of the four directions.

\begin{figure}[htbp]
\label{inj}
\begin{center}
\epsfig{file=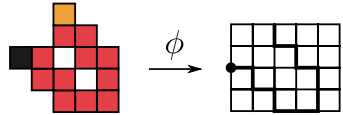,scale=0.8}
\caption{An injective map from oriented snakes to self-avoiding walks of same length}
\label{classific}
\end{center}
\end{figure}

A polyomino $P$ is said to be inscribed in a rectangle $R$ when $P$ is included in $R$ and  the cells of $P$ touch each of the four sides of $R$.  Enumeration of inscribed polyominoes with area minimal, minimal plus one and minimal plus two were investigated in~\cite{GCN,GCP}.  Inscribed $3D$ polyominoes of minimal volume  were investigated in~\cite{GC}.  Here the  notation $b\times k$  is used for the dimension of the circumscribed rectangle.  This paper is devoted to the enumeration of partially directed snakes but more generally, we are interested in the combinatorial study of different classes of snake polyominoes which are presented in section~\ref{class}. 

In section~\ref{nonins} we construct the length generating functions of partially directed snakes. We start with the two dimensional case, then we show that it is possible to extend the construction to the three dimensional and more generally to the $N$ dimensional cases. In section~\ref{insc}  we construct,  for each value of the width $b$ of the circumscribed rectangle, a two-variable generating function for inscribed partially directed snakes. The two parameters are the length $n$ of the snake and the height $k$ of the rectangle. In order to realize this construction, we define  subclasses of partially directed snakes and we establish their generating functions. In particular we define the class of bubble snakes which is related to bargraph polyominoes (see~\cite{BmR}) and we establish their length generating function by way of a combinatorial factorization. In section~\ref{bij} we describe a straightforward  bijection relating bargraphs to snakes  that we have not seen elsewhere in the literature.  In the final section, we present a conjecture concerning inscribed snakes of maximal length.


\section{Families of snakes} 
\label{class}
Our classification of snake polyominoes follow closely that of self-avoiding walks presented in~\cite{Bm} from which we borrow part of the terminology. Nevertheless snake polyominoes possess one geometrical property that cannot exist in self-avoiding walks: two cells of a snake separated by at least two cells may touch with their corners.  
When there is no such pair of cells in a snake, we say that we have a {\em kiss-free} snake.  

A snake polyomino is a {\em partially directed} snake ({\em PDS}) towards the North when, starting from the tail, each new cell is added in one of the three directions North, East or West with the necessary constraint that East cannot follow immediately West and vice versa. As a consequence, when a west step follows an east step in a {\em PDS}, there are at least two north steps  between them. The same is true when East follows West. 

Throughout this paper, we will adopt the convention of using capital letters $F(s,t,q)$ for generating functions, lower case letters $f(b,k,n)$ for their coefficients and calligraphic capital letters $\mathcal{F}(b,k,n)$ for the corresponding sets of snakes of length $n$ inscribed in a $b\times k$ rectangle:
\begin{align*}
F(s,t,q)=\sum_{b,k,n}f(b,k,n)s^bt^kq^n.
\end{align*}
The first values of $s(n)$, the total number of non-oriented snakes of length $n$, are presented in table~\ref{tab2} (see {\em OEIS} A182644 (voir \cite{OEIS}) for $n\leq 24$).  

\begin{table}[htdp]
\begin{center}
\scalebox{0.9}{
\begin{tabular}{|c|c|c|c|c|c|c|c|c|c|c|c|c|c|c|c|c|}
\hline
$n$&1&2&3&4&5&6&7&8&9&10&11&12&13&14&15&16\\
\hline
$s(n)$&1&2&6&14&34&82&198&470&1122&2662&6334&14\ 970&35\ 506&83\ 734&198\ 086&466\ 314
\\
\hline
\end{tabular}}
\end{center}
\caption{Number of snakes of length $n$}
\label{tab2}
\end{table}%


\section{Non-inscribed partially directed snakes}
\label{nonins}
When we want to emphasize on the fact that the rectangular dimensions of a snake are not constrained to specific values, we say that it is  {\em non-inscribed} but when no confusion is possible, we simply call it a snake. 
The parameter of interest for these snakes is the length. 
In this section three families of  {\em PDS} are investigated:  two dimensional, three dimensional and $N$ dimensional snakes. 
For  each family, a functional equation is constructed and solved to obtain a one-variable rational generating function. Exact formulas in terms of the length of the snakes are presented for  the $2${\em D} and the $3${\em D} cases.

\subsection{Two dimensions}
 
The one-variable generating function of $2${\em D}  {\em PDS}, denoted $PDS_{2D}(q)$, is defined by
\begin{align*}
PDS_{2D}(q)=\sum_{n\geq 0} pds_{2D}(n) q^n
\end{align*}
where $pds_{2D}(n)$ is the number of {\em PDS} of length $n$.

Recall that a {\em PDS} is formed using north, east and west steps only.  Moreover two north steps are required to change direction between east and west. Thus the construction of a functional equation for {\em PDS} uses similar ideas to that of partially directed {\em SAW} (see~\cite{Bm} or~\cite{St} for generating function), also known as one-sided {\em SAW}, that have to be adapted to these additionnal constraints. 

In order to build the functional equation, note that the set of {\em PDS} admits a partition into two sets:
\begin{itemize}
\item {\em PDS} without two consecutive north steps, $\mathcal S^1_{2D}$;
\item {\em PDS} containing two consecutive north steps at least once, $ \mathcal S^2_{2D}$.
\end{itemize}
Samples of each sets are illustrated in Figure~\ref{NonInsc_pds1N2N}.  
\begin{figure}[h!]
\centering
\subfigure[$\mathcal S^1_{2D}$]{{\includegraphics[scale=0.8]{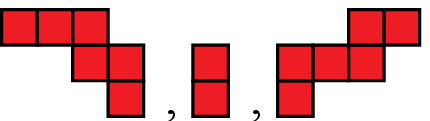}}}\qquad\qquad
\subfigure[$ \mathcal S^2_{2D}$]{{\includegraphics[scale=0.8]{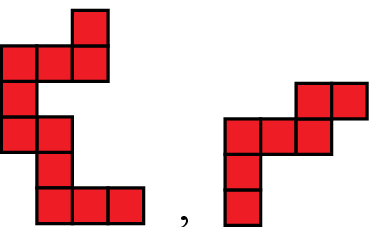}}}
\caption{A partition of the set of  {\em PDS} \label{NonInsc_pds1N2N}}
\end{figure}

Since two consecutive north steps are required to change direction from East to West or from West to East in a {\em PDS}, snakes in $\mathcal S^1_{2D}$ cannot have both west and east steps.  In fact these snakes either have north or east steps only or they have north or west steps only. Lets call {\em  North-East} snakes the oriented {\em PDS} in $\mathcal S^1_{2D}$ with only north or east steps. {\em  North-West} snakes are defined similarly. The North-East snakes are counted by Fibonacci numbers, as proved in the next Proposition.
\begin{proposition} \label{propSNE}
The number of North-East snakes of length $n$, $s_{NE}(n)$, satisfies the relation
\begin{align}
s_{NE}(n)=s_{NE}(n-1)+s_{NE}(n-2) 
\label{sNE}
\end{align}
with $s_{NE}(0)=s_{NE}(1)=1$, which is equal to the $(n+1)^\text{e}$ Fibonacci number. Consequently the generating function of North-East snakes is
\begin{align}
S_{NE}(q)&=\sum_{n\geq 0}s_{NE}(n)q^n=\frac{1}{1-q-q^2}. 
\label{S_{NE}(z)}
\end{align}
\end{proposition}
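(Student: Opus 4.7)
The plan is to encode each North-East snake by its sequence of step directions and thereby reduce the enumeration to counting binary words that avoid a fixed factor.

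First I would observe that, since every step lies in $\{N,E\}$, the cell coordinates along a North-East snake are componentwise non-decreasing. Consequently no cell can be visited twice, and the snake is entirely determined by the word $w \in \{N,E\}^{n-1}$ that lists its step directions from tail to head (with the convention that $n=1$ corresponds to the empty word). The defining condition for $\mathcal{S}^1_{2D}$, namely the absence of two consecutive north steps, translates exactly to: $w$ contains no factor $NN$. Thus $s_{NE}(n)$ equals the number of words of length $n-1$ over $\{N,E\}$ avoiding $NN$, with $s_{NE}(0)=1$ corresponding to the empty snake.

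Next I would establish the recurrence by splitting such words on their first letter. If $w$ begins with $E$, the remaining suffix is an arbitrary admissible word of length $n-2$, contributing $s_{NE}(n-1)$ snakes. If $w$ begins with $N$, then either $w = N$ (the unique snake of length $2$ starting with $N$, matching the $s_{NE}(0)=1$ empty-suffix case), or $n \geq 3$ and the prohibition of $NN$ forces the second letter to be $E$; stripping the prefix $NE$ leaves an arbitrary admissible word encoding a North-East snake of length $n-2$. Together with the base values $s_{NE}(0)=s_{NE}(1)=1$, this gives $s_{NE}(n)=s_{NE}(n-1)+s_{NE}(n-2)$ for $n\geq 2$, which is the Fibonacci recurrence shifted so that $s_{NE}(n) = F_{n+1}$.

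For the generating function, I would multiply the recurrence by $q^n$, sum over $n\geq 2$, and substitute the initial values, obtaining $(1-q-q^2)\,S_{NE}(q) = 1$ and hence the desired closed form. The argument is essentially routine; the one point that requires slight care is the bookkeeping at $n=2$ in the combinatorial split, where the ``strip $NE$'' operation degenerates to stripping just $N$ but still consistently matches the $s_{NE}(0)=1$ term. I do not foresee any substantial obstacle.
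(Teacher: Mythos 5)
Your proof is correct, but it reaches the Fibonacci recurrence by a different route than the paper. The paper's proof exhibits a bijection between North-East snakes of length $n$ and monomino-domino tilings of an $n\times 1$ board (each domino carries the north step, each passage to the next piece an east step) and then invokes the classical fact that such tilings are counted by the $(n+1)^{\text{st}}$ Fibonacci number, so that the recurrence~\eqref{sNE} is inherited from the tilings. You instead encode an oriented North-East snake by its step word $w\in\{N,E\}^{n-1}$, note that membership in $\mathcal S^1_{2D}$ says exactly that $w$ avoids the factor $NN$, and obtain the recurrence directly by splitting on the first letter ($E$ contributes $s_{NE}(n-1)$; $N$ forces the block $NE$ and contributes $s_{NE}(n-2)$). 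These are two incarnations of the same structure---the tiling recurrence is precisely your first-letter split, with monomino $\leftrightarrow E$ and domino $\leftrightarrow NE$---but yours is self-contained, needing no appeal to the tiling interpretation, while the paper's is shorter because it cites a known count. One detail you (like the paper) pass over quickly: to see that every $NN$-avoiding word actually yields a snake polyomino, observe that along a monotone $N/E$ path the lattice distance between the $i^{\text{th}}$ and $j^{\text{th}}$ cells equals $|j-i|$, so non-consecutive cells are never edge-adjacent; your monotonicity remark gives self-avoidance, but this adjacency check is what guarantees each cell has degree at most two, completing the bijection between $NN$-avoiding words and North-East snakes.
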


\begin{proof} 
We use the well known interpretation of Fibonacci numbers as the number of tilings of a $b\times 1$ board with monominoes and dominoes (see~\cite{Benjamin2003proofs}). It is immediate that these tilings are in one-to-one correspondence with North-East snakes when we observe that starting from the left, we place dominoes in vertical positions and the following piece at the upper end of the domino, as illustrated in Figure~\ref{NonInsc_SNEE_bijection}. 
The monomino-domino tiling of a $n\times 1$ rectangle being counted by the $(n+1)^e$ Fibonacci number, North-East snakes must satisfy equation~\eqref{sNE} from which equation~\eqref{S_{NE}(z)} is obtained by  use of standard methods (see~\cite{Wilf1994}). 

\begin{figure}[h!]
\centering
\includegraphics[scale=0.8]{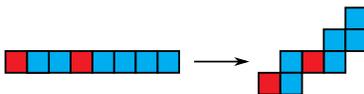}
\caption{Bijection from  monomino(red)-domino(blue) tilings of a row towards  North-East snakes
\label{NonInsc_SNEE_bijection}}
\end{figure}
\end{proof}

By symmetry there are as many North-East snakes as North-West snakes. However the non-oriented snakes consisting of only east steps are the same as the snakes with west steps only.  Also the unique snake of length~2 with one north step belongs to the two sets.  As a result of these observations, we obtain the following generating function of the set $\mathcal S^1_{2D}$:
\begin{align}
S^1_{2D}(q)=\frac{2}{1-q-q^2}-\frac{1}{1-q}-q^2.
 \label{S_{1N}(z)}
\end{align}
Consider $W\in\mathcal{S}^2_{2D}$. Identifying the last two consecutive north steps when moving from bottom to top, three disjoint parts appear in $W$ respectively illustrated in green, red and blue in Figure~\ref{NonInsc_S2N}: {\em i}) any   {\em PDS}, {\em ii}) two north steps and {\em iii}) either a North-East or a North-West snake. This combinatorial factorization is the key to obtain the generating function ${S}^2_{2D}(q)$.
\begin{figure}[h!]
\centering
\includegraphics[scale=0.8]{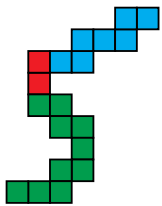}
\caption{A {\em PDS} in $\mathcal{S}^2_{2D}$ 
\label{NonInsc_S2N}}
\end{figure}
\begin{proposition} The generating function of {\it PDS} containing two consecutive north steps  at least once is
\begin{align}
S^2_{2D}(q)=\left(PDS_{2D}(q)-1+\frac{q^2}{1-q}\right)q^2\left(\frac{2}{1-q-q^2}-1\right). 
\label{S_{2N}(q)}
\end{align}
\end{proposition}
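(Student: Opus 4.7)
The plan is to justify the combinatorial factorization depicted in Figure~\ref{NonInsc_S2N} and then multiply the generating functions of the three parts. Since every snake in $\mathcal{S}^2_{2D}$ contains at least one north step, reversing its orientation would introduce a forbidden south step; the PDS orientation is therefore unique, so one may speak unambiguously of the last pair of consecutive north steps, at some position $(j,j+1)$. This cuts the snake into (i) the bottom on cells $c_1,\ldots,c_j$, (ii) the two north steps $s_j,s_{j+1}$ adding cells $c_{j+1},c_{j+2}$, and (iii) the top on cells $c_{j+3},\ldots,c_n$. First I would verify that this is a bijection: the $y$-coordinate strictly increases through part (ii), so cells of (ii) and (iii) cannot coincide with cells of (i); and since the endpoint $c_j$ of the bottom has polyomino-degree $1$, grafting the two north steps above it creates no cell of degree~$\geq 3$.

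Next I would analyse part (iii). As $(j,j+1)$ is the \emph{last} pair of consecutive norths, the entry step $s_{j+2}$ cannot be $N$ and no subsequent pair of consecutive norths may appear in the top. Splitting on this entry step, if $s_{j+2}=E$ (resp.\ $W$) then any $W$ (resp.\ $E$) further on would need to be separated from it by at least two consecutive norths, which the top does not contain; the top therefore uses only $\{N,E\}$ (resp.\ $\{N,W\}$) steps and is exactly a non-empty North-East (resp.\ North-West) snake. Combining with the empty-top case and invoking Proposition~\ref{propSNE} together with its $E\leftrightarrow W$ mirror, the top generating function reads
\begin{align*}
1+\bigl(S_{NE}(q)-1\bigr)+\bigl(S_{NW}(q)-1\bigr)=\frac{2}{1-q-q^2}-1,
\end{align*}
where the two $-1$'s remove the empty sub-snake that would otherwise be counted once in each of $S_{NE}$ and $S_{NW}$.

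For part (i), the bottom is an \emph{oriented} non-empty PDS because its head cell $c_j$ is distinguished as the one that supports the $NN$ tower of part (ii). A non-oriented PDS containing at least one north step admits a unique PDS orientation, but a horizontal straight snake of length $\geq 2$ admits two, and these two choices yield genuinely distinct extended snakes (the $NN$ tower is grafted on the left endpoint or on the right endpoint). Hence the oriented non-empty count equals the non-oriented non-empty count plus one per length $\geq 2$, giving the bottom generating function
\begin{align*}
PDS_{2D}(q)-1+\frac{q^2}{1-q}.
\end{align*}
The main subtlety I expect is precisely this orientation bookkeeping on the bottom: one needs to remember that $PDS_{2D}(q)$ is a non-oriented count and to identify the horizontal straight snakes as the unique source of the correction $\frac{q^2}{1-q}$. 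Once this is in place, multiplying bottom~$\cdot~q^2~\cdot~$top yields the formula~\eqref{S_{2N}(q)}.
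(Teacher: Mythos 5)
Your proof is correct and takes essentially the same route as the paper: the decomposition at the last pair of consecutive north steps into a non-empty bottom \emph{PDS}, the two north cells (factor $q^2$), and a possibly empty North-East or North-West top, with the term $\frac{q^2}{1-q}$ accounting for the two ways of grafting the tower onto a horizontal row. You merely spell out more explicitly than the paper the uniqueness of the \emph{PDS} orientation and the resulting orientation bookkeeping, which is a welcome clarification rather than a departure.
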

\begin{proof}
The three factors in equation~\eqref{S_{2N}(q)} correspond to each of the three parts of $W$. The left factor represents any  {\em PDS} where the subtraction of 1 guarantees that the {\em PDS} is not empty, i.e. that we add at least one cell in order to build the two last consecutive north steps.  Moreover since two consecutive north steps can be appended to a horizontal row of length at least $2$ in two different ways, the term $\frac{q^2}{1-q}$ is added. The middle factor $q^2$ in equation~\eqref{S_{2N}(q)} is coding the last two consecutive north steps (red cells in Figure~\ref{NonInsc_S2N}).  The right factor is twice the North-East snakes generating function~\eqref{S_{NE}(z)}, since there are as many North-East  snakes as North-West snakes, with $1$ subtracted since there is only one way to add an empty snake to the red north steps.
\end{proof}

Since $\mathcal{S}^1_{2D}$ and $\mathcal{S}^2_{2D}$ form a partition of the set $\mathcal{PDS}$, the sum of~\eqref{S_{1N}(z)} and~\eqref{S_{2N}(q)} gives  a functional equation for 2{\em D} {\em PDS}:
\begin{align*}
PDS_{2D}(q)=\left(PDS_{2D}(q)-1+\frac{q^2}{1-q}\right)q^2\left(\frac{2}{1-q-q^2}-1\right)+\frac{2}{1-q-q^2}-\frac{1}{1-q}-q^2
\end{align*}
which is immediately solved  to obtain the rational generating function
\begin{align}\label{PDS2D}
PDS_{2D}(q)&=\frac{q^5+q^3+q^2-2q+1}{(1-q)(1-2q-q^3)}\\
\nonumber
	&=1+q+2q^2+6q^3+14q^4+32q^5+72q^6+160q^7+ 354q^8+\cdots.
\end{align}
A formula giving the number of 2{\em D} {\em PDS} according to its length $n$ is obtained by converting the rational expression of $PDS_{2D}(q)$ into partial fractions:
\begin{align*}
pds_{2D}(n)&=\frac{1}{59}\sum_{\alpha:1-2\alpha-\alpha^3=0 } (13\alpha^{-1}+11-5\alpha)\alpha^{-n}-1\qquad n\geq 2,\\
pds_{2D}(0)&=pds_{2D}(1)=1.
\end{align*}

\subsection{Three dimensions} A $3${\em D} polyomino is a face-connected set of unit cubes in the discrete three dimensional space.  
A $3${\em D} {\em PDS} (towards the  $yz$ plane) is a $3${\em D} polyomino that starts at the origin and is formed with steps along the axis $x$, $y$ and $z$ where negative movement along the $y$ and $z$ axis are prohibited. The possible directions from one cell to a face-connected neighbour cell are denoted by $x^+,\ x^-,\ y^+$ and $z^+$.  The $3${\em D} {\em PDS}  are a natural extension of the $2${\em D} {\em PDS} which are snakes that can only have directions both positive and negative along the $x$ axis and positive along the $y$ axis.

$3${\em D} $\mathcal{ PDS}$ is partitionned into two sets, as shown in Figure~\ref{Parti3dPDS}:
\begin{itemize}
\item $3${\em D} {\em PDS} that do not contain two consecutive steps  in the set of directions $\{y^+,z^+\}$, $\mathcal{S}_{3D}^{1}$;
\item $3${\em D} {\em PDS} that contain two consecutive steps at least once taken in the set of directions $\{y^+,z^+\}$, $\mathcal{S}_{3D}^{2}$.
\end{itemize}
\begin{figure}[h!]
\subfigure[$\mathcal{S}_{3D}^{1}$]{{\includegraphics[scale=0.13]{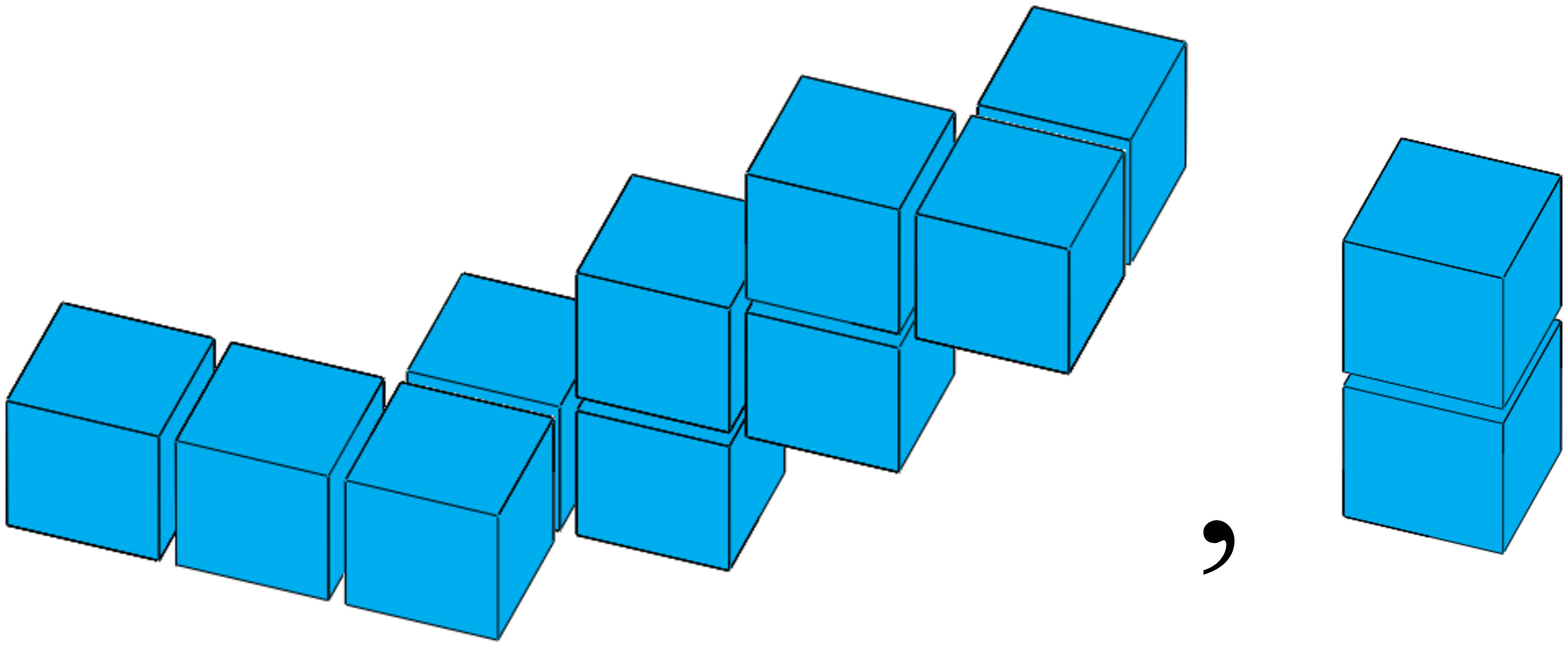}}}\qquad\qquad
\subfigure[$\mathcal{S}_{3D}^{2}$]{{\includegraphics[scale=0.18]{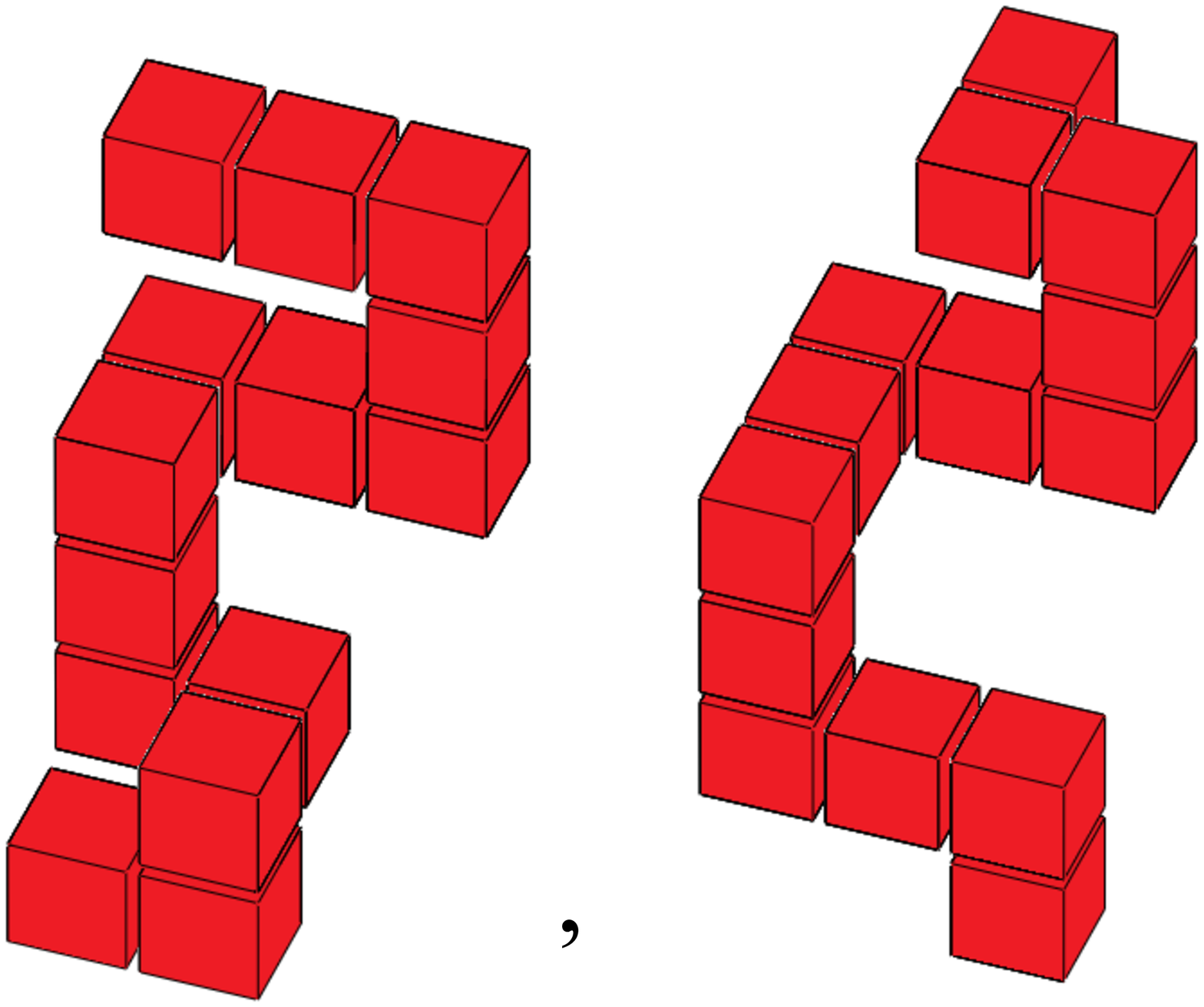}}}\qquad
\subfigure{{\includegraphics[scale=0.1]{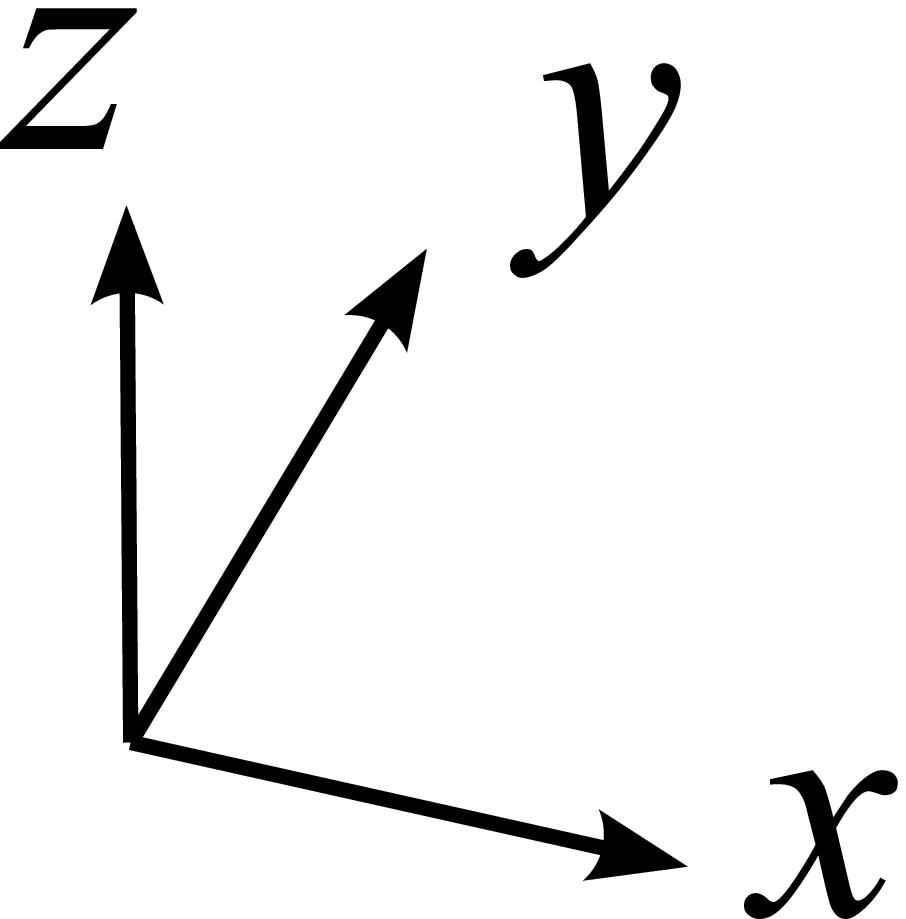}}}
\caption{A partition of the set of   $3${\em D} {\em PDS}\label{Parti3dPDS}}
\end{figure}
\begin{figure}[h!]
\centering
\subfigure[$x^+y^+z^+x^-$]{\makebox[2.2cm]{\includegraphics[scale=0.06]{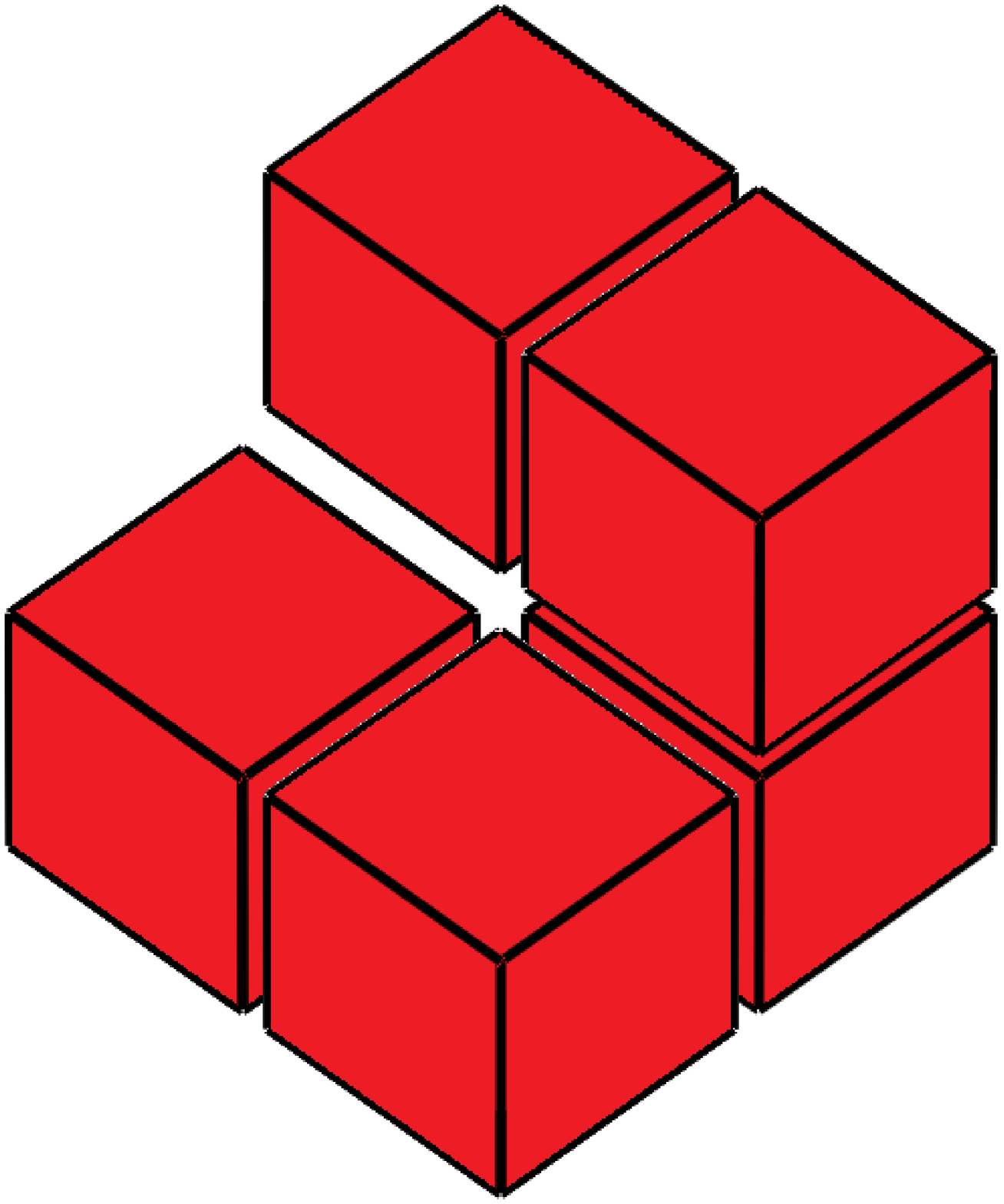}}}\qquad 
\subfigure[$x^-z^+z^+x^+$]{\makebox[2.2cm]{\includegraphics[scale=0.07]{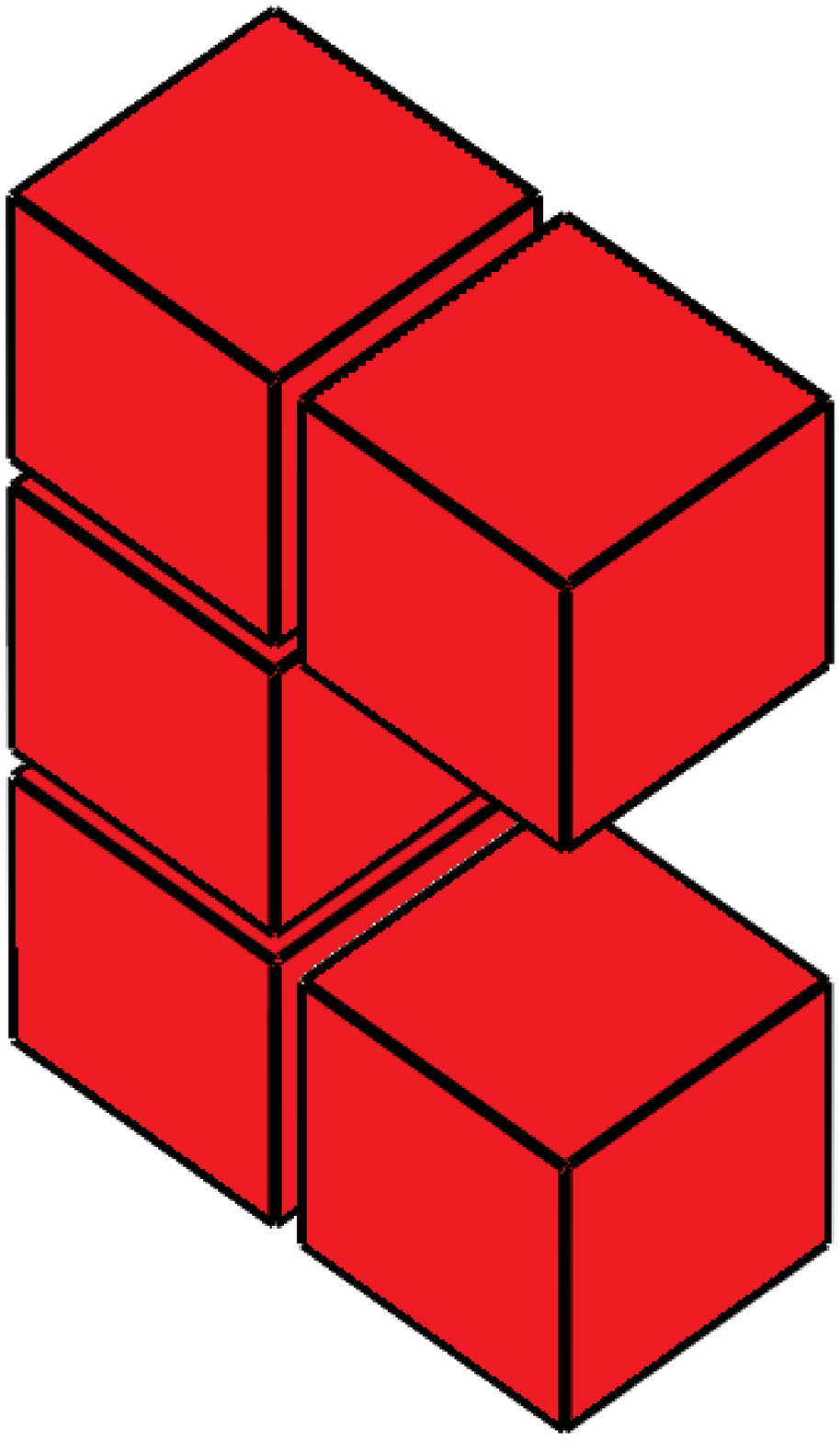}}}\qquad 
\subfigure[$x^-z^+y^+x^+$]{\makebox[2.2cm]{\includegraphics[scale=0.06]{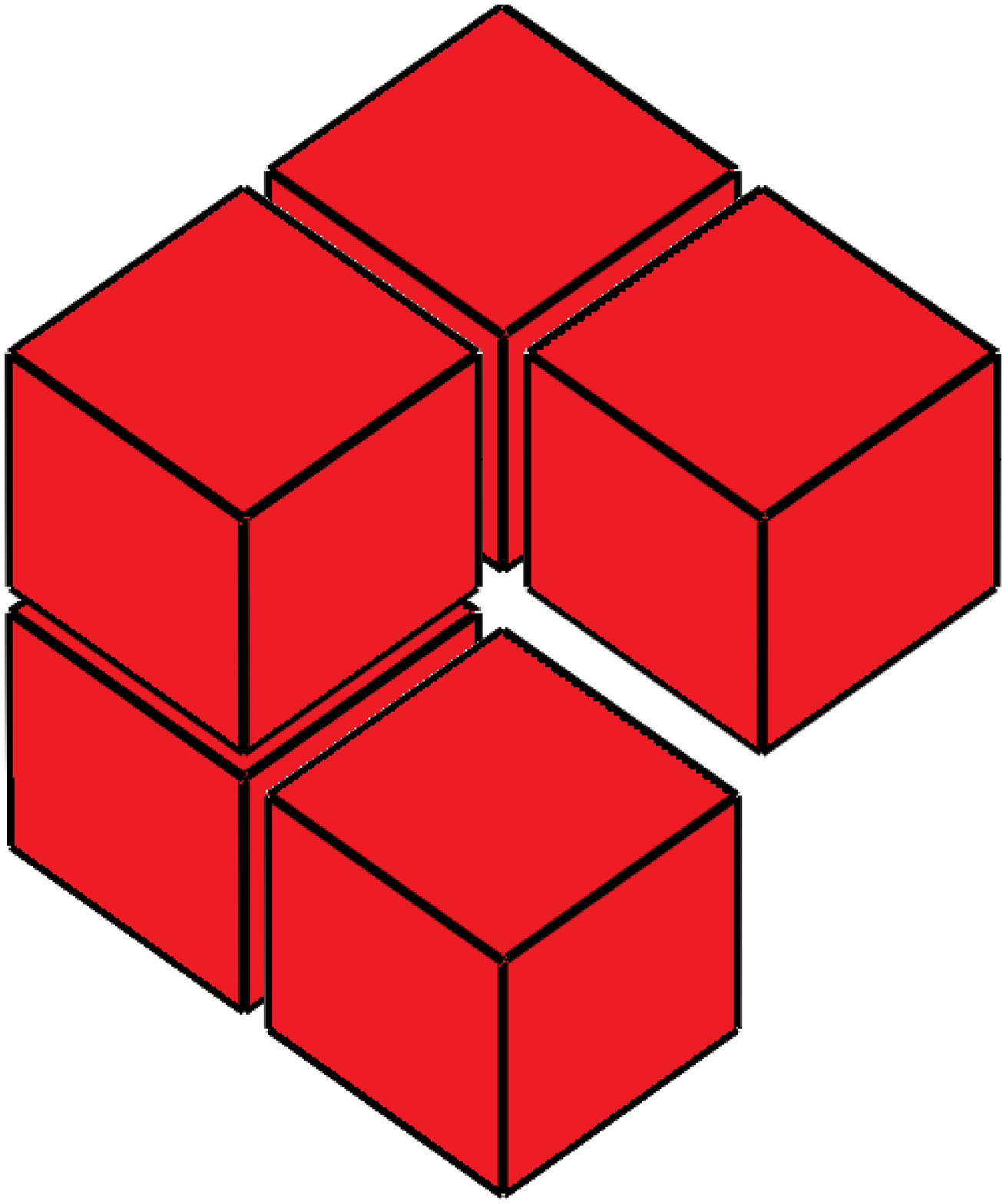}}}\qquad 
\subfigure[$x^+y^+y^+x^-$]{\makebox[2.2cm]{\includegraphics[scale=0.07]{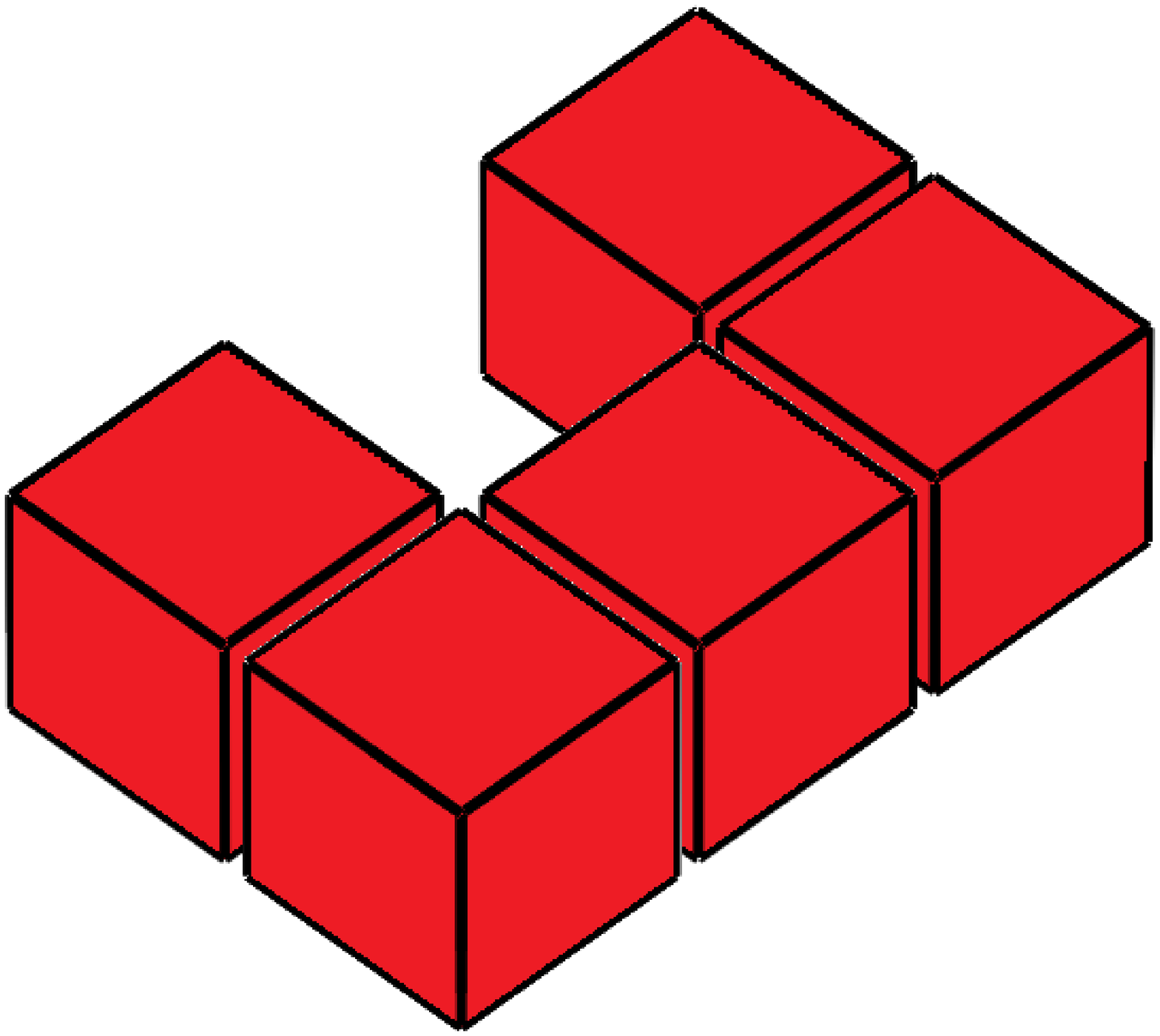}}}\qquad 
\subfigure{\includegraphics[scale=0.09]{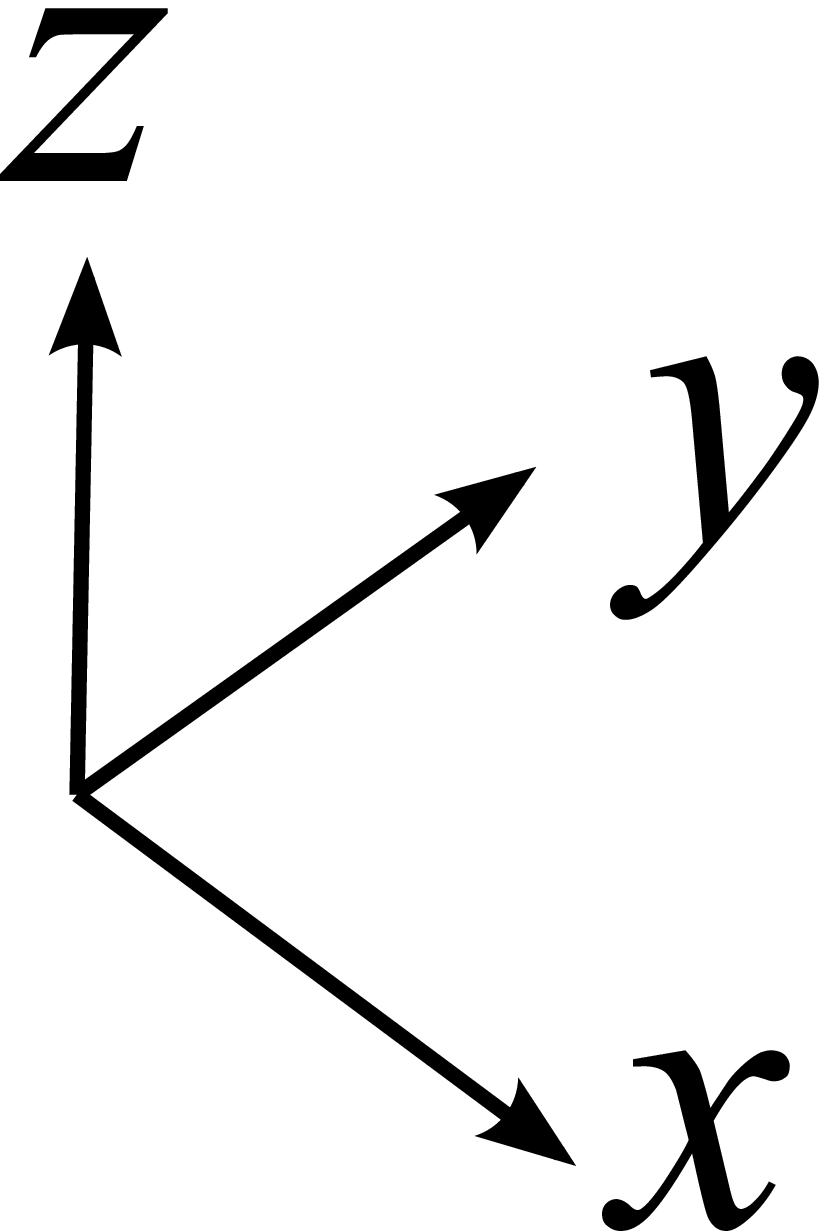}}
\caption{Four ways to move between $x^+$ and $x^-$ \label{Setyz}}
\end{figure}
Figure~\ref{Setyz} shows the four types of restricted directions in $\mathcal{S}_{3D}^{1}$.. 
The set $\mathcal{S}_{3D}^{1}$ is analogous to the set $\mathcal S^1_{2D}$, in which the snakes were not allowed to go from East to West, i.e from $x^+$ to $x^-$. As a consequence of prohibiting two consecutive steps in the set $\{y^+,z^+\}$, snakes belonging to $\mathcal{S}_{3D}^{1}$ cannot contain both directions $x^+$ and $x^-$.  So snakes in $\mathcal{S}_{3D}^{1}$ either have $x^+$ or $x^-$ steps but not both, leading to describe $PDS_{x^+}$ and $PDS_{x^-}$ snakes. Lets denote $\mathcal{PDS}_{x^+}$ the set of snakes in  $\mathcal{S}_{3D}^{1}$ with $x^+$ moves.  $\mathcal{PDS}_{x^-}$ is similarly defined.
\begin{proposition}
The number of $PDS_{x^+}$ of length $n$, denoted $pds_{x^+}(n)$, satisfies the relation
\begin{align}
pds_{x^+}(n)=pds_{x^+}(n-1)+2pds_{x^+}(n-2), \label{pds_{x^+}(n)}
\end{align}
with $pds_{x^+}(0)=pds_{x^+}(1)=1$ and the length generating function of $PDS_{x^+}$ is
\begin{align}
PDS_{x^+}(q)=\sum_{n\geq 0}pds_{x^+}(n)q^n=\frac{1}{(1+q)(1-2q)} \label{PDS_{x^+}(z)}
\end{align}
so that
\begin{align}
pds_{x^+}(n)=\frac{1}{3}\left(2^{n+1}+(-1)^n\right). \label{pds_{}x^+(n)}
\end{align}
\end{proposition}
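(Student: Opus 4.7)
The plan is to adapt the argument of Proposition~\ref{propSNE} to the three-dimensional setting in which the admissible step directions are $x^+, y^+, z^+$ with the restriction that no two consecutive steps both lie in $\{y^+, z^+\}$.

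First, I would establish the recurrence~\eqref{pds_{x^+}(n)} by a last-step decomposition. For a $PDS_{x^+}$ of length $n\geq 3$, I would split into two cases according to the direction of the final step joining the $(n-1)$-th and $n$-th cells. If the last step is $x^+$, deleting the last cell yields an arbitrary $PDS_{x^+}$ of length $n-1$, and conversely any such snake admits a unique extension by an $x^+$ step, contributing $pds_{x^+}(n-1)$. If the last step is $y^+$ or $z^+$, the constraint forces the previous step to be $x^+$, so removing the last two cells yields an arbitrary $PDS_{x^+}$ of length $n-2$ together with two free choices for the terminal direction, contributing $2\,pds_{x^+}(n-2)$. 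Adding the two contributions gives the recurrence. The base cases $pds_{x^+}(0)=pds_{x^+}(1)=1$ are trivial, and a direct enumeration gives $pds_{x^+}(2)=3 = pds_{x^+}(1)+2\,pds_{x^+}(0)$, so the recurrence is actually valid already from $n=2$.

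Second, I would convert the recurrence into a generating-function identity. Multiplying by $q^n$ and summing over $n\geq 2$ yields
\[
PDS_{x^+}(q)-1-q = q\bigl(PDS_{x^+}(q)-1\bigr) + 2q^2\,PDS_{x^+}(q),
\]
which solves to~\eqref{PDS_{x^+}(z)} upon using the factorization $1-q-2q^2=(1-2q)(1+q)$. The closed form~\eqref{pds_{}x^+(n)} then follows from the partial-fraction decomposition
\[
\frac{1}{(1-2q)(1+q)}=\frac{2/3}{1-2q}+\frac{1/3}{1+q},
\]
whose geometric-series expansions combine to give $\tfrac{1}{3}(2^{n+1}+(-1)^n)$.

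I do not foresee any substantive obstacle. The only point worth recording before the decomposition is the bijection between admissible step-sequences and $PDS_{x^+}$ snakes: since every step strictly increases one of the coordinates $x,y,z$, the coordinate sum is monotone along the walk, hence no cell is ever revisited and any two non-consecutive cells differ in at least two coordinates and so cannot be face-adjacent. This automatically enforces the snake degree condition, legitimizes the last-step classification, and makes the remaining computation routine.
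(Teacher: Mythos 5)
Your proof is correct and follows essentially the same route as the paper: the paper peels off the \emph{first} one or two cells (first step $x^+$, or first step in $\{y^+,z^+\}$ forcing an $x^+$ step) to get the recurrence $pds_{x^+}(n)=pds_{x^+}(n-1)+2\,pds_{x^+}(n-2)$, while you peel off the \emph{last} one or two cells, which is a mirror-image of the same decomposition, and the passage to the generating function and the partial-fraction closed form is the standard computation the paper invokes implicitly. Your added remark that the monotone coordinate sum makes the degree/self-avoidance condition automatic is a nice explicit justification that the paper leaves unstated.
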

\begin{proof}
A $PDS_{x^+}$ of length $n$ starts with a step in $\{x^+, y^+,z^+\}$. If the first step is $x^+$, then the second step is $x^+$, $y^+$ or $z^+$. Consequently the snake is completed by any $PDS_{x^+}$ of length $n-1$. If the first step is $y^+$ or $z^+$, then it must be followed by an $x^+$ step and, to complete the snake, any $PDS_{x^+}$ of length $n-2$ can be concatenated. This yields equation~\eqref{pds_{x^+}(n)} from which we deduce equation~\eqref{PDS_{x^+}(z)}. The exact formula~\eqref{pds_{}x^+(n)} is obtained from equation~\eqref{PDS_{x^+}(z)}.
\end{proof}

The  sets $\mathcal{PDS}_{x^+}$ and $\mathcal{PDS}_{x^-}$ are not disjoint. The horizontal rows of cells (formed with  $x^+$ steps only or with  $x^-$ steps only), the unique snake with one $y^+$ and the unique snake with one $z^+$ belong to the two sets. As a consequence the generating function of $3${\em D} {\em PDS} that does not contain two consecutive steps  in the set $\{y^+,z^+\}$ is
\begin{align}
S_{3D}^{1}(q)=\frac{2}{(1+q)(1-2q)}-\frac{1}{1-q}-2q^2. \label{S_{3D}^{1yz}(z)}
\end{align}
A snake $V$ in $\mathcal{S}_{3D}^{2}$ can always be decomposed in three distinct parts  shown respectively in red, green and blue in Figure~\ref{3partsS2yz}:  {\em i}) any $3${\em D} {\em PDS},  {\em ii}) the last two consecutive directions taken in the set $\{y^+,z^+\}$ when going from bottom to top  in $V$,  {\em iii}) either a $PDS_{x^+}$ snake or a $PDS_{x^-}$ snake.
\begin{figure}[h!]
\centering 
\subfigure[$z^+z^+$]{\includegraphics[scale=0.19]{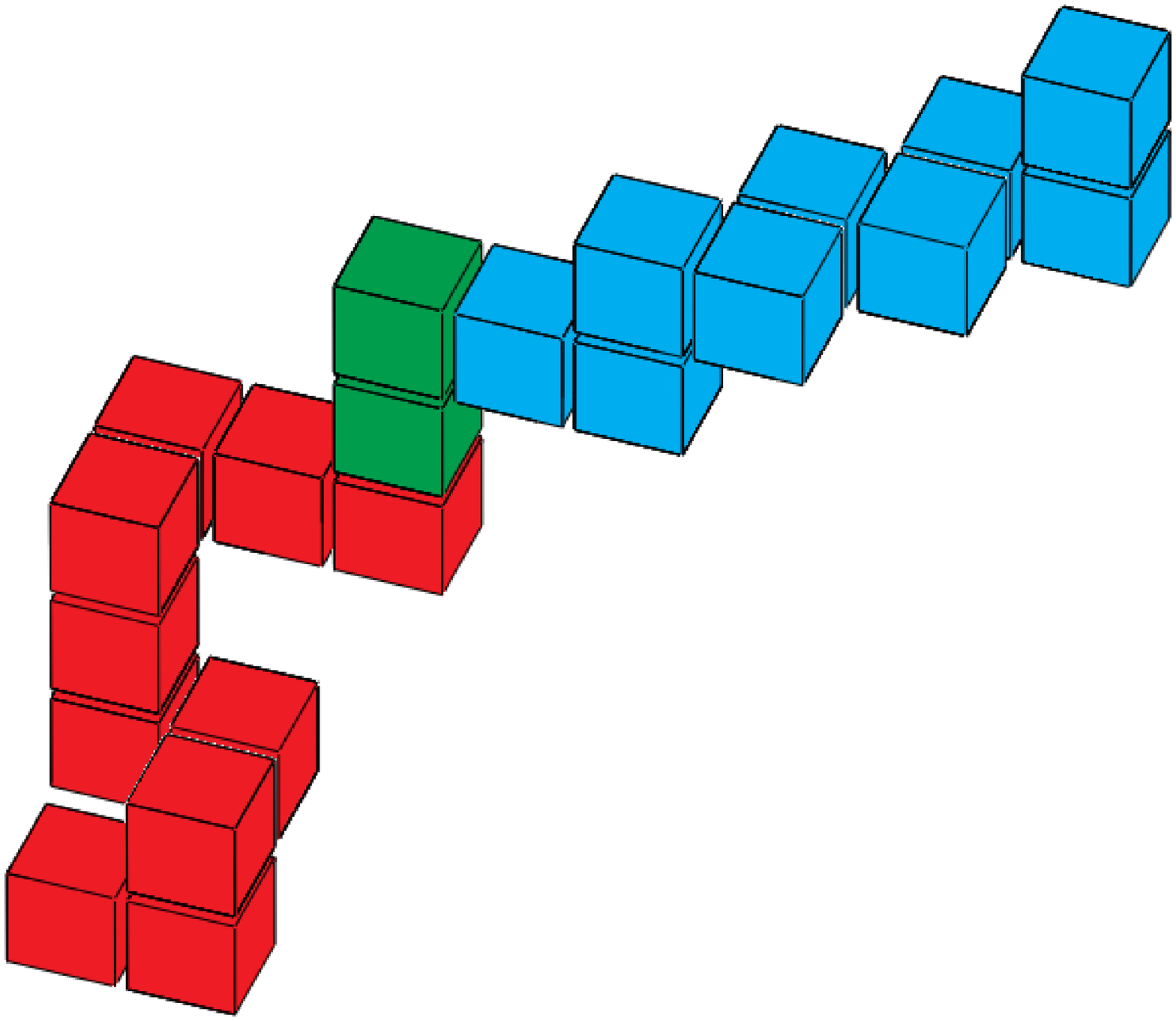}}\qquad\qquad 
\subfigure[$z^+y^+$]{\includegraphics[scale=0.2]{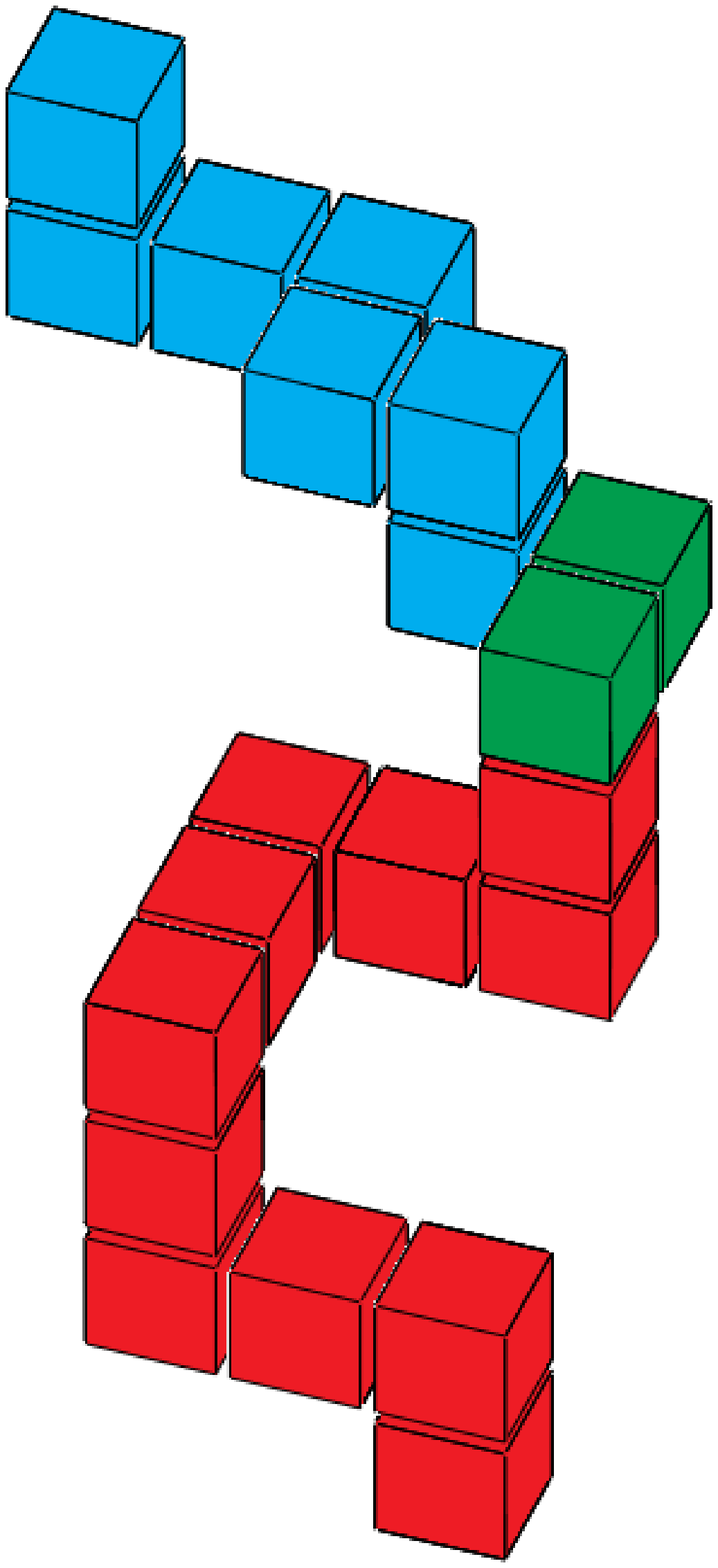}}\qquad\qquad 
\subfigure{{\includegraphics[scale=0.1]{Referentielle2.eps}}}
\caption{Three parts of $\mathcal S_{3D}^{2}$\label{3partsS2yz}}
\end{figure}
\begin{proposition}
The generating function of the set $\mathcal{S}_{3D}^{2}$  is
\begin{align}
{S}_{3D}^{2}(q)=\left( PDS_{3D}(q)-1+\frac{q^2}{1-q}\right) 4q^2 \left(\frac{2}{(1+q)(1-2q)}-1\right)\label{{S}_{3D}^{2yz}}.
\end{align}
\end{proposition}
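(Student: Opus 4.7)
The plan is to adapt the combinatorial factorization from the 2D case, with the additional freedom from four directions for the distinguished pair in place of the single vertical pair in 2D. I fix $V\in\mathcal{S}_{3D}^{2}$; by definition it carries at least one pair of consecutive steps in $\{y^+,z^+\}$, and I distinguish the last such pair when reading from tail to head. This pair isolates three successive sub-snakes (i), (ii), (iii) as depicted in Figure~\ref{3partsS2yz}, each of which will contribute one factor to the formula.

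Part (ii) is the distinguished pair itself, one of the four ordered choices $y^+y^+$, $y^+z^+$, $z^+y^+$, $z^+z^+$, each adding two new cells; this yields the middle factor $4q^2$. Part (i), the sub-snake preceding the pair, is any non-empty 3D PDS, contributing $PDS_{3D}(q)-1$; exactly as in the 2D case, I then add the term $\frac{q^2}{1-q}$ to account for horizontal rows of length at least two along the $x$-axis, which are the only PDS whose orientation is not uniquely determined and to which the distinguished pair can be appended at either extremity. The left factor is thus $PDS_{3D}(q)-1+\frac{q^2}{1-q}$.

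Part (iii) is constrained by maximality of the distinguished pair to contain no two consecutive $\{y^+,z^+\}$ steps; moreover, since the last step of (ii) lies in $\{y^+,z^+\}$, the first step of (iii), if any, is forced into $\{x^+,x^-\}$. I would then argue, following the geometric reasoning that underlies the 2D North-East/North-West dichotomy, that a single $x$-orientation governs (iii) entirely: an $x^-$ step occurring after an initial $x^+$ (or symmetrically) would bring the snake back adjacent to a previously visited cell, producing a cell of degree three and violating the snake definition. Hence (iii) lies in $\mathcal{PDS}_{x^+}\cup\mathcal{PDS}_{x^-}$, whose generating function is $2\,PDS_{x^+}(q)-1=\frac{2}{(1+q)(1-2q)}-1$, the $-1$ cancelling the empty snake common to both sets.

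Multiplying the three factors produces the identity in the statement. The step I expect to require the most care is the geometric forcing in part (iii): showing that any $x^-$ step occurring after an initial $x^+$ step must create an additional neighbour contact and thus violate the snake degree constraint is the subtlety carried over from the 2D argument, and its rigorous justification requires a careful case analysis of the cells intermediate between the two reversed $x$-steps.
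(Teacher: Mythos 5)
Your decomposition is exactly the paper's: cut at the last pair of consecutive steps in $\{y^+,z^+\}$, code the pair by $4q^2$, the lower part by $PDS_{3D}(q)-1+\frac{q^2}{1-q}$ (with the same two-ways-to-append-to-a-row correction), and the upper part by $\frac{2}{(1+q)(1-2q)}-1$. Two of your justifications, however, do not hold as written. First, the claim that an $x^-$ step occurring after an $x^+$ step necessarily brings the snake back adjacent to a previously visited cell and creates a degree-three cell is false in general: $x^+y^+z^+x^-$ or $x^+y^+y^+x^-$ are perfectly valid snakes, which is precisely what Figure~\ref{Setyz} of the paper illustrates. The correct forcing for part (iii) is two-pronged: a sign change in $x$ with zero or one intermediate step is impossible for geometric reasons (a revisited cell, or a $2\times 2$ block producing a cycle or a degree-three cell), while two or more intermediate steps would all lie in $\{y^+,z^+\}$ and hence contain a consecutive pair occurring after the distinguished one, contradicting its maximality. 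You do state the maximality constraint, but your argument attributes the exclusion entirely to the degree condition, which by itself does not suffice.

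Second, you justify the right factor as the generating function of the union $\mathcal{PDS}_{x^+}\cup\mathcal{PDS}_{x^-}$, with the $-1$ removing ``the empty snake common to both sets.'' That is not correct as a statement about the union: the two families also share every horizontal row, the single cell, and the two snakes made of one $y^+$ or one $z^+$ step; indeed the paper subtracts $\frac{1}{1-q}+2q^2$, not $1$, when it computes $S_{3D}^{1}(q)$ from these same two families, so your reasoning applied consistently would yield $2PDS_{x^+}(q)-\frac{1}{1-q}-2q^2$ for this factor. What makes $\frac{2}{(1+q)(1-2q)}-1$ correct is a count of appendings, not of shapes, exactly in the spirit of your treatment of the left factor: every nonempty admissible continuation attaches to the top cell of the pair with first step $x^+$ or $x^-$ (two choices), and for a fixed sign the continuations with $m$ cells are in bijection with $\mathcal{PDS}_{x^+}$ snakes of length $m$, while the empty continuation attaches in only one way; this gives $1+2\sum_{m\geq 1}pds_{x^+}(m)q^m=2\,PDS_{x^+}(q)-1$, which is how the paper reads this factor. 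With these two repairs your proof coincides with the paper's.
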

\begin{proof}
The three parts described earlier yield the three factors of equation~\eqref{{S}_{3D}^{2yz}},  in similarity with the $2${\em D} case. The left factor characterizes any $3${\em D} {\em PDS} where 1 is subtracted and $\frac{q^2}{1-q}$ is added respectively to assure that the $3${\em D} {\em PDS} is not empty and because there are two ways to append a given choice of two consecutive steps to a horizontal row. The middle factor $4q^2$ represents the four possibilities of two consecutive steps taken in  $\{y^+,z^+\}$. The right factor is twice the generating function $PDS_{x^+}(q)$ of equation  ~\eqref{PDS_{x^+}(z)}, since  $PDS_{x^+}$ and $PDS_{x^-}$ are symmetric snakes. Moreover 1 is subtracted to take into account that an empty snake can be appended in only one way to  two consecutive steps in  $\{y^+,z^+\}$.
\end{proof}

The one-variable generating function of $3${\em D}  {\em PDS}, denoted $PDS_{3D}(q)$, is defined by
$PDS_{3D}(q)=\sum_{n\geq 0} pds_{3D}(n) q^n$ where $pds_{3D}(n)$ is the number of $3${\em D} {\em PDS} of length $n$.
Because the sets $\mathcal{S}_{3D}^{1}$ and $\mathcal{S}_{3D}^{2}$ form a partition of  $3${\em D} ${\mathcal{PDS}}$, the sum of equations~\eqref{S_{3D}^{1yz}(z)} and~\eqref{{S}_{3D}^{2yz}} gives a functional equation for $3${\em D} {\em{PDS}}:
\begin{align*}
PDS_{3D}(q)&=\left( PDS_{3D}(q)-1+\frac{q^2}{1-q}\right) 4q^2 \left(\frac{2}{(1+q)(1-2q)}-1\right)\\ &\quad+\frac{2}{(1+q)(1-2q)}-\frac{1}{1-q}-2q^2.
\end{align*}
This equation is directly solved to obtain
\begin{align}\label{PDS3D}
PDS_{3D}(q)&=\frac{4q^5+2q^4+2q^2-3q+1}{(1-q)(1-3q-4q^3)}\\
\nonumber
	         &= 1+q+3q^2+13q^3+45q^4+153q^5+517q^6+1737q^7+5829q^8+\cdots.
\end{align}
Expressing the rational generating series $PDS_{3D}(q)$ into partial fractions, the number of $3${\em D} {\em PDS} of length $n$ is given by
\begin{align*}
pds_{3D}(n)&=\frac{1}{12}\sum_{\alpha:1-3\alpha-4\alpha^3=0} (\alpha^{-1}+1)\alpha^{-n}-1 \qquad n\geq 2,\\
pds_{3D}(0)&=pds_{3D}(1)=1.
\end{align*}


\subsection{N dimensions $\mathbf{PDS}$} 
There is a natural  extension of partially directed snakes  in two and three dimensions  to the $N$-dimensional lattice $\mathbb{Z}^N$.These $N$-dimensional  snakes are denoted $ND$ $PDS$ and their set of possible directions along the coordinate axes  is denoted $\{x_1^+,x_1^- ,x_2^+,\ldots , x_N^+\}$. Steps in the negative direction is allowed only along the $x_1$ axis and forbidden along the other axis $x_2,x_3,\ldots , x_n$. 
We partition again the set $\mathcal{ND\; PDS}$ in two  disjoint subsets:
\begin{itemize}
\item $ND$ {\em PDS} that do not contain two consecutive steps  in the set  $\{x_2^+,x_3^+,\ldots , x_N^+\}$: $\mathcal{S}_{ND}^{1}$;
\item $ND$ {\em PDS} that contain two consecutive steps  in the set $\{x_2^+,x_3^+,\ldots , x_N^+\}$ at least once:  
$\mathcal{S}_{ND}^{2}$.
\end{itemize}
If we denote by $\mathcal{PDS}_{x_1^+}$ the set of snakes in $\mathcal{S}_{ND}^{1}$ with $x_1^+$ steps only, then the cardinalty of that set satisfies 
\begin{align*}
pds_{x_1^+}(n)&=pds_{x_1^+}(n-1)+(N-1)pds_{x_1^+}(n-2)
\end{align*}
with $pds_{x_1^+}(0)=pds_{x_1^+}(1)=1$ from which we deduce the length generating function
\begin{align*}
PDS_{x_1^+}(q)&=\frac{1}{1-q-(N-1)q^2}
\end{align*}
then,  using  arguments  similar to the $2D$ and $3D$ cases, we deduce the exact formula 
\begin{align*}
pds_{x_1^{+}}(n) = \frac{(2 - 2N)\left(\frac{2 - 2N}{1 - \sqrt{4N - 3}}\right)^{n}}{ (1 - \sqrt{4N-3})\sqrt{4N - 3}} + \frac{(2N - 2) \left(\frac{2 - 2N}{1 + \sqrt{4N - 3}} \right)^n}{(1 + \sqrt{4N - 3})\sqrt{4N - 3}}
\end{align*}
and the following rational expressions for the generating functions of the two sets $\mathcal{S}_{ND}^{1}$ and $\mathcal{S}_{ND}^{2}$:
\begin{align*}  
& S_{ND}^{1}(q) = \frac{2}{q^{2}(1-N) - q + 1} - \frac{1}{1 - q} - (N-1)q^{2},\\
& S_{ND}^{2}(q) = \left(PDS_{ND}(q) - 1 + \frac{q^{2}}{1 - q} \right) (N-1)^{2}q^{2} \left(\frac{2}{q^{2}(1-N) - q + 1} - 1 \right).
\end{align*}
Then solving the functional equation 
\begin{align*} 
 PDS_{ND}(q) = S_{ND}^{1}(q) + S_{ND}^{2}(q),
\end{align*}
we obtain the following rational expression for the generating function of $N$ dimensional partially directed snakes
\begin{align} \label{PDSND}
 PDS_{ND}(q) = \frac{(N-1)^{2}\ q^{5} + (N-1)(N-2)q^{4} - (N-1)(N-3)q^{3} + (N-1)q^{2} - Nq + 1}{(1-q)(1 - Nq - (N-1)^{2}\ q^{3})}.
\end{align}
Equations (\ref{PDS2D})  and (\ref{PDS3D})  are the special cases $N=2$ and $N=3$ of equation  (\ref{PDSND}).


\section{Inscribed partially directed snakes} \label{SecInscribedPDS}
\label{insc}
Let $pds(b,k,n)$ be the number of partially directed snakes of length $n$ inscribed in a rectangle of size $b\times k$ and define the corresponding generating function $PDS(s,t,q)$ as 
\begin{align*}
PDS(s,t,q)=\sum_{b,k,n}pds(b,k,n)s^bt^kq^n.
\end{align*}

The goal of this section is to present  rational expressions for the generating functions $PDS_b(t,q)$ of partially directed snakes of width $b$. In~\cite{Bm} functional equations were first established and then solved to obtain generating functions for bargraph polyominoes. The method used in the present section is different and consists in starting from a primitive snake for which we know the generating function,  stacking on top of each other fundamental pieces of snakes that will produce all possible snakes of a given width $b$ and then modify accordingly the corresponding generating function. Thus the constructions of the snakes and of the generating functions evolves simultaneously. The rules for establishing the correspondence are quite straightforward: $i)$ Stacking one family of snakes on top of another corresponds to the product of the corresponding generating functions. $ii)$ Picking in a bag an arbitrary number of snakes belonging to a family $\mathcal{A}$ and then stacking these snakes on top 
of the current snake 
is represented by  the multiplication of the current generating function with $\frac{1}{1-A(s)}$ where $A(s)$ is the generating function of the family $\mathcal{A}$. 

Let $b$ and $k$ be respectively the horizontal and vertical lengths of  the rectangle. We  need four kinds of $PDS$ to construct our generating functions: crossings, bubbles, endings and pillars. The snakes in the last family  are used to connect snakes  of the other kinds and are defined in section~\ref{Sec4.1}.  These four kind of snakes are combined in different ways to form four disjoint families of snakes whose union will be the entire set $\mathcal{PDS}(b,k,n)$. In section~\ref{Sec4.2} generating functions of these snakes are presented.

\subsection{Families of inscribed partially directed snakes}\label{Sec4.1} A {\it bubble} is a snake with its tail and head in the same column adjacent to a vertical side of the rectangle and its other cells in the other columns in the rectangle  that do not touch the other side of the rectangle so that a bubble is not an inscribed snake. The set of bubble snakes with head and tail on a given side of the rectangle, say left,  that occupies $r$ columns ($r<b$)  is  denoted $\mathcal{B}_r(k,n)$ and  $r$ is called the width of the bubble. 
A {\it crossing}  of width $b$ is an inscribed {\it PDS }whose only two cells adjacent to the left and right sides of the rectangle are the tail and head of the snake.  When the tail touches the left side, the head touches the right side and vice versa. The set of crossings of length $n$ inscribed in a $b\times k$ rectangle with bottom cell in its left corner is denoted $\mathcal{C}(b,k,n)$. 
An {\it ending} is a partially directed snake whose only cell in the column on one side of the rectangle is the tail. The set of endings with tail on a given side of the rectangle is denoted $\mathcal{E}(b,k,n)$. Vertical rows of cells on a column adjacent to a side of the rectangle are called {\it pillars} with length generating function noted $P(t,q)$.
Orientation is not relevant in these four kind of  snakes which are therefore non-oriented snakes. Figure~\ref{famSnake} shows a sample of snakes from each kind where the blue cells in Figures~\ref{fambubble} and~\ref{famEnd} are part of a pillar necessary when they are piled over an existing snake.
\begin{figure}[htbp]
\begin{center}
\centering
\subfigure[Crossing]{\makebox[2.7cm]{\includegraphics[scale=0.8]{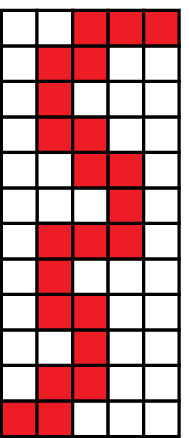}}}
\subfigure[Bubble with pillar]{\makebox[3.2cm]{\includegraphics[scale=0.8]{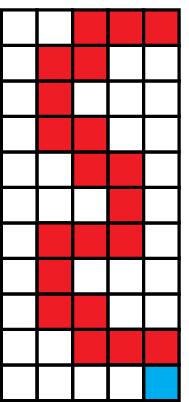}\label{fambubble}}}\quad
\subfigure[Ending with pillar]{\makebox[3.2cm]{\includegraphics[scale=0.8]{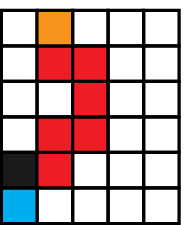}\label{famEnd}}}
\subfigure[Pillar]{\makebox[2.7cm]{\includegraphics[scale=0.8]{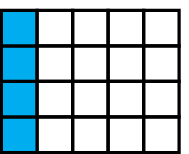}\label{fampillar}}}
\caption{Four kinds of snakes}
\label{famSnake}
\end{center}
\end{figure}

Using a symmetry argument, we can stack on top of each other snakes of each of the families just defined and shown in Figure~\ref{fam}.  In this stacking process, pillars are used as connectors between a snake in one of the three other families  and the current snake. Bubble  and crossing snakes may  appear an arbitrary number of times in a snake but endings can be used only once at each end of the snake to terminate the stacking process. This observation allows the use of ordinary product of generating functions to describe sets of inscribed $\mathcal{PDS}$. In fact we will show that all $PDS$ are obtained in this way and that they belong to exactly one of the following four families of inscribed snakes.

Family $\mathcal{F}_1$ is made of two crossings between which there is an arbitrary number of crossings and bubbles. Endings can be added at each end of the two mandatory crossings. Here we use crossings and bubbles up to vertical symmetry and the factor $2$ in equation (\ref{f1}) refers to the two choices offered for the lowest crossing in the snake : left or right side of the rectangle for the bottom cell.

Family $\mathcal{F}_2$ has one mandatory crossing and one mandatory bubble. The stacking process starts with a crossing and finishes with a bubble. Between these two snakes,  there is an arbitrary number of crossings and bubbles.  Again ending snakes can be added to each extremity of this structure.  Here we use a horizontal and a vertical symmetry  to collect all the snakes of this family which explains the factor $4$ in equation (\ref{f2}). 

Family $\mathcal{F}_3$ has one crossing snake between two mandatory bubbles.  On one side of the crossing, there is at least one bubble with no crossing. On the other side of the crossing, there are an arbitrary number of crossings and bubbles terminating with one bubble. Each extremity of this structure has a possible ending. We use a horizontal symmetry to collect all snakes of this family which is why the factor $2$ appears in equation (\ref{f3}). 

Finally the set $\mathcal{F}_4$ is made of a unique crossing, no bubble, with possible ending snakes on each extremity. We multiply by two the expression in equation (\ref{f4}) because of a vertical symmetry. We remove the snake $H$ which consists of a single horizontal row from this set for it would otherwise be counted twice. 

Samples from  families $\mathcal{F}_1$ to $\mathcal{F}_4$ are shown in Figure \ref{fam} and their structure is explicitly described in the following equations where we use the notation $()^*$ to mean that the family of snakes inside the parenthesis can be appended an arbitrary number of times.
\begin{flalign}
	&\mathcal{F}_{1} := 2(\varnothing \, \cup [(\varnothing \, \cup E) \times P]) \times C \times (P \times (C \cup B))^{*} \times P \times C \times (\varnothing \, \cup [P \times (\varnothing \, \cup E)])\label{f1}\\
	&\mathcal{F}_{2} := 4(\varnothing \, \cup [(\varnothing \, \cup E) \times P]) \times C \times (P \times (C \cup B))^{*} \times P \times B \times (\varnothing \, \cup [P \times (\varnothing \, \cup E)])\label{f2}\\
	&\mathcal{F}_{3} := 2(\varnothing \, \cup [(\varnothing \, \cup E) \times P]) \times B \times (P \times B)^{*} \times P \times C \times (P \times (C \cup B))^{*} \times P \times B \times \nonumber\\
	&\phantom{aaaaaa}(\varnothing \, \cup [P \times (\varnothing \, \cup E)])\label{f3}\\
	&\mathcal{F}_{4} := 2(\varnothing \, \cup [(\varnothing \, \cup E) \times P]) \times C \times (\varnothing \, \cup [P \times (\varnothing \, \cup E)]) - H\label{f4}
\end{flalign}
These equations are transformed into generating functions in section~\ref{genfunc}. As explained above, the factors $2$ and $4$ in equations (\ref{f1}) to (\ref{f4})  are the  result of the use of symmetries to obtain all snakes in the corresponding families.  

\begin{figure}[htbp]
\centering
	\subfigure[$\mathcal{F}_{1}$]{\hspace{0.8cm}\includegraphics[scale=0.8]{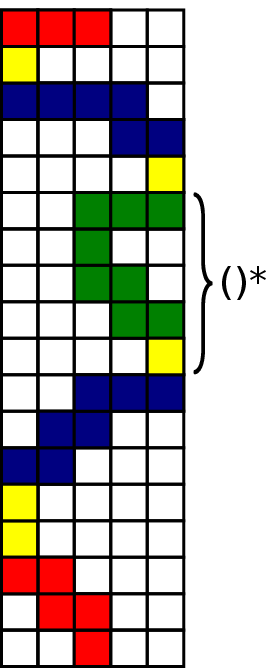}}
	\subfigure[$\mathcal{F}_{2}$]{\hspace{0.8cm}\includegraphics[scale=0.8]{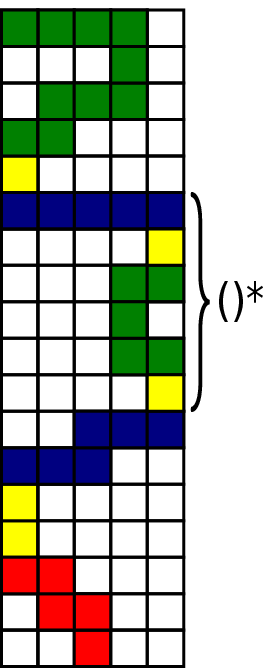}}
	\subfigure[$\mathcal{F}_{3}$]{\hspace{0.8cm}\includegraphics[scale=0.8]{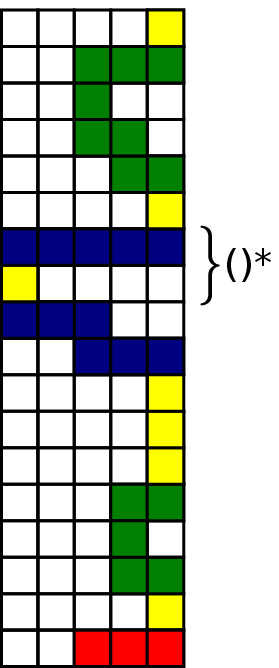}}\hspace{0.8cm}
	\subfigure[$\mathcal{F}_{4}$]{\includegraphics[scale=0.8]{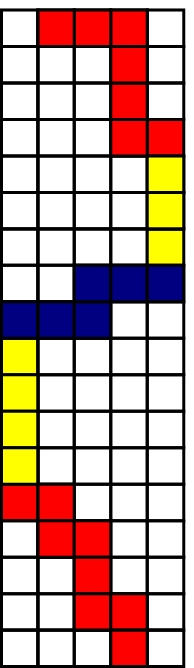}}
	\caption{The four families of {\em PDS} where a crossing is blue, a bubble is green, an ending is red, a pilar is yellow and the notation $()^*$ means that the family of snakes inside the parenthesis can be appended an arbitrary number of times}
	\label{fam}
\end{figure}
By construction, families $\mathcal{F}_1$ to $\mathcal{F}_4$ are disjoint and any inscribed $PDS$  belongs to one of these families. We thus obtain the following proposition  
\begin{proposition}\label{proppart}
For all positive integers $b,k$ and $n\geq b+k-1$, we have 
\begin{align*}
{\mathcal{PDS}}(b,k,n)=\mathcal{F}_1(b,k,n)+\mathcal{F}_2(b,k,n)+\mathcal{F}_3(b,k,n)+\mathcal{F}_4(b,k,n).
\end{align*}
\end{proposition}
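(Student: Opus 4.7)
The proof plan is to (i) define a canonical vertical decomposition of an arbitrary inscribed PDS $W$ into a stack of horizontal core pieces glued by pillars, (ii) identify each core piece as a crossing, bubble, or ending, and (iii) read off from the decomposition which of the four families the snake belongs to.

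For (i), I decompose $W$ based on its interaction with the left and right vertical sides of the rectangle. Scanning $W$ from tail to head, I assign to $W$ a uniquely determined sequence of maximal non-pillar sub-snakes (the \emph{cores}) separated by maximal vertical runs of cells occupying a boundary column (the pillars). The PDS direction-change rule, which forces at least two north steps between any east step and a subsequent west step and vice versa, together with the definition of a pillar as a vertical row adjacent to a side, makes this cut well defined and guarantees that reassembly via the stacking rules of Section~\ref{Sec4.1} recovers $W$ exactly.

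For (ii), each core piece, viewed with its own tail and head (its lowest and highest cells in the stack), must by the inscribed-ness of $W$ touch one or both vertical sides. A case analysis on which sides the core's tail and head meet shows that the core is either a crossing (tail and head on opposite sides), a bubble (both on the same side, possible only in the interior of the stack), or an ending (only for the lowest or highest core of $W$, touching a single side through a single cell). Ruling out all other configurations, for example a core whose extremal cells touch the same side three times or a bubble-like sub-snake appearing at an extremity, is the heart of the argument and the expected main obstacle; it uses both the absence of pillar-length vertical segments inside a core and the east–west switching rule.

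For (iii), let $c$ and $b'$ denote the numbers of crossings and of bubbles in the decomposition of $W$. Since $W$ is inscribed we must have $c\ge 1$. If the stack contains a single core piece, this piece is a crossing and $W\in\mathcal{F}_4$; otherwise the lowest and highest core pieces are each a crossing or a bubble, and the ordered pair (lowest, highest) is (crossing, crossing), mixed, or (bubble, bubble), placing $W$ respectively into $\mathcal{F}_1$, $\mathcal{F}_2$, or $\mathcal{F}_3$. These four cases are pairwise exclusive and exhaustive. The symmetry factors $2$ and $4$ in equations (\ref{f1})--(\ref{f4}) precisely account for the choices of tail location and for collecting snakes through the horizontal and vertical reflections mentioned in Section~\ref{Sec4.1}, while the subtraction of $H$ in (\ref{f4}) removes the single horizontal row that would otherwise be counted twice under vertical symmetry. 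The cardinality identity of Proposition~\ref{proppart} then follows immediately.
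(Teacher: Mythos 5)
Your overall route is the same as the paper's: its proof rests on exactly the observations you list --- every inscribed \emph{PDS} contains at least one crossing, any inscribed snake factors uniquely as a juxtaposition of crossings, bubbles, endings and pillars, and the snake is a central part with at most one ending appended at each extremity, the family being read off from the first and last pieces of that central part. Like you, the paper does not actually carry out the case analysis you single out in step (ii); it simply asserts the unique factorization, so your identification of that step as the main obstacle is consistent with the level of detail of the published argument.

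Step (iii), however, would fail as literally written. In your decomposition the endings are themselves cores, and by your own step (ii) they may occur as the lowest or highest core. A snake consisting of a single crossing with an ending (and connecting pillar) attached above and/or below has two or three cores, so it misses your ``single core piece'' case, and yet its extremal cores are endings rather than crossings or bubbles, so it also escapes your trichotomy (crossing, crossing) / mixed / (bubble, bubble); such a snake belongs to $\mathcal{F}_4$. The repair is the one the paper makes explicit: first strip the at most one ending at each extremity, then classify the remaining central part --- exactly one crossing and no bubble gives $\mathcal{F}_4$, and otherwise the pair formed by the first and last pieces of the central part gives $\mathcal{F}_1$, $\mathcal{F}_2$ or $\mathcal{F}_3$, with the symmetry factors in (\ref{f1})--(\ref{f4}) and the subtraction of $H$ handled as you describe. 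With that correction your argument coincides with the paper's proof.
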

\begin{proof}
The proof is based on the following observations. {\em i}) Each inscribed $PDS$ contains at least one crossing. {\em ii})  Any inscribed snake can be factored in a unique way as a juxtaposition of crossings, bubbles, endings and pillars. {\em iii}) Appending an ending snake at one step terminates the stacking process  in one direction. The head of the ending snake then becomes the head or tail of the entire snake. Any snake is the juxtaposition of a central part with possible endings appended on each side. If the central part begins and ends with two different crossings then we have the set $\mathcal{F}_1$. If the central part begins with a crossing and ends with a bubble then we have the set $\mathcal{F}_2$. If the central part
begins and ends with a bubble then we have $\mathcal{F}_3$. Finally if the center has exactly one crossing and no bubble then we have $\mathcal{F}_4$. There is no other possibility for the central part. \\
\end{proof}

 \subsection{Generating Functions} \label{Sec4.2}
 \label{genfunc} Recall that the variables $s,\ t,\ q$ provide respectively the width and height   of the rectangle and the length of the snake. Since the column in which a pillar occur is accounted for in the other families of snakes, the variable $s$ is absent from the generating function $P(t,q)$ of pillars with at least one cell and thus becomes 
 \begin{equation*}
P(t,q)=\frac{tq}{1-tq}.
\end{equation*}
In the expansions of generating functions that follow, we will often omit to explicitly declare the formal variables $t$ and $q$ when their presence is obvious, especially for the generating functions $PB(t,q)$ and $B(t,q)$ that will simply be written $PB$ and $B$ respectively. 

\paragraph{\bf Bubbles of given width} The generating function for the bubbles $\mathcal{B}_2(k,n)$  of width two is
\begin{align*}
B_2(t,q)=\frac{t^3q^5}{1-tq}.
\end{align*}
The generating function of bubbles on top of pillars (see Figure~\ref{bubblesA})  is done by multiplication of generating functions. Thus a bubble of width $r$ with a pillar at its bottom, denoted $PB_r$, is the product  
 \begin{align*}
PB_r=B_r\times P
\end{align*}
of the corresponding generating functions. For instance $PB_2(t,q)=\frac{t^4q^6}{(1-tq)^2}$. Bubbles in $\mathcal{B}_3$ are obtained by juxtaposition of possibly several snakes in $\mathcal{PB}_2$ on top of  one bubble in $\mathcal{B}_2$.  Then two pillars are added on each end in the tail and head column together with two new cells on a new third column. These two new cells become the new head and tail of the bubble in $\mathcal{B}_3$. This construction, shown in Figure~\ref{bubblesB}, gives 
\begin{align*}
B_3(t,q)=\frac{B_2}{(1-PB_2)}\times \frac{q^2}{(1-tq)^2}=
\frac{t^3q^7}{((1-tq)^2-t^4q^6)(1-tq)}.
\end{align*}

To construct a $B_4$ snake, we need at least one $B_3$ to which we append an arbitrary number of $B_2$ and $B_3$ on each side. Assume that the mandatory  $B_3$ is the lowest among all the $B_3$ present (see Figure \ref{bubblesC_Cor}). Then below that $B_3$ is an arbitrary number of $B_2$ and above are an arbitrary number of $B_2$ and $B_3$.   Then we append a possible pillar and one cell in a new column on each extremity. All $B_4$ satisfy this description which is translated into the following  expression : 
\begin{align*}
B_4(t,q)&= \frac{B_3}{(1-PB_2)(1-PB_2-PB_3)} \times \frac{q^2}{(1-tq)^2}.
\end{align*}
\begin{figure}[htbp]
\begin{center}
\subfigure[$PB_2$]{\includegraphics[scale=0.8]{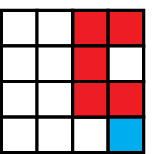}\label{bubblesA}}\qquad
\subfigure[$B_3$]{\includegraphics[scale=0.8]{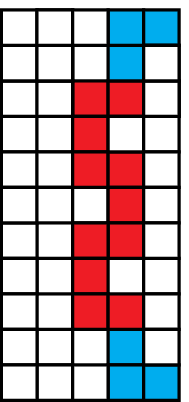}\label{bubblesB}}\qquad
\subfigure[$B_4$]{\includegraphics[scale=0.9]{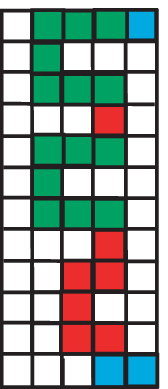}\label{bubblesC_Cor}}\qquad
\caption{Bubbles}
\label{bub}
\end{center}
\end{figure}

The precedent description for the set $\mathcal{B}_4$ can be extended to all sets $\mathcal{B}_r$ with $r\geq 3$ so that we have a rational generating function for each bubble in $\mathcal{B}_r$. 

\begin{proposition}\label{propbubl} For all integers $r\geq 3$ we have 
 \begin{align*}
B_r(t,q)&=\frac{B_{r-1}}{(1-\sum_{i=2}^{r-2}PB_i)(1-\sum_{i=2}^{r-1}PB_i)} \times \frac{q^2}{(1-tq)^2}.
\end{align*}
\end{proposition}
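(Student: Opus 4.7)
The plan is to prove the statement by induction on $r$, generalizing the explicit constructions of $B_3$ and $B_4$ given just above. The base case $r=3$ is the previously derived expression $B_3 = \frac{B_2}{1-PB_2}\cdot\frac{q^2}{(1-tq)^2}$: the proposed formula reduces to exactly this because the empty sum $\sum_{i=2}^{1}PB_i$ vanishes, so its denominator factor is $1$.

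For the inductive step, I would fix $r \geq 4$, assume the formula for $B_{r-1}$, and produce a canonical, unique decomposition of an arbitrary bubble $W\in\mathcal{B}_r$. The structural claim is that $W$ is a vertical stack (anchored along the left column) of sub-bubbles of widths in $\{2,\ldots,r-1\}$ glued together by pillars, sandwiched between two "cap" pieces, one at each extremity, each consisting of an optional pillar followed by a single new cell lying in the new rightmost column $r$. Because each cap extends the horizontal reach of the inner stack by exactly one column, and because $W$ by hypothesis has width $r$, the inner stack itself must already reach column $r-1$; this forces at least one sub-bubble of the stack to have width exactly $r-1$.

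Let $W_0$ be the lowest sub-bubble of width $r-1$ in $W$. By minimality of $W_0$, everything strictly below it is a (possibly empty) sequence of pillar-joined sub-bubbles of widths at most $r-2$, giving the factor $\frac{1}{1-\sum_{i=2}^{r-2}PB_i}$; everything strictly above it is a (possibly empty) sequence of pillar-joined sub-bubbles of widths at most $r-1$, contributing $\frac{1}{1-\sum_{i=2}^{r-1}PB_i}$; the piece $W_0$ itself contributes $B_{r-1}$; and the two end caps together contribute $\left(\frac{q}{1-tq}\right)^2 = \frac{q^2}{(1-tq)^2}$, where the factor $\frac{1}{1-tq}$ at each extremity encodes the optional pillar (possibly of length zero) and the factor $q$ encodes the single cell pushed into column $r$. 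Taking the product of these contributions yields exactly the claimed formula.

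The main obstacle is to verify cleanly that this "lowest $B_{r-1}$" decomposition is actually well-defined and bijective: one must check that every $W\in\mathcal{B}_r$ admits such a vertical slicing into sub-bubbles, interspersed connecting pillars, and two end caps in exactly one way, with no column or row shared ambiguously between two components. Once that combinatorial picture is pinned down, the appeal to the induction hypothesis for $B_{r-1}$ and the translation from the structural decomposition to a product of generating functions are routine, following the same dictionary already used in section~\ref{insc}.
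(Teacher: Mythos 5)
Your decomposition is exactly the paper's argument: the paper proves the proposition by extending its explicit $B_3$ and $B_4$ constructions, singling out the lowest sub-bubble of width $r-1$, allowing only widths at most $r-2$ below it and at most $r-1$ above it (each glued with a connecting pillar, giving the factors $\frac{1}{1-\sum_{i=2}^{r-2}PB_i}$ and $\frac{1}{1-\sum_{i=2}^{r-1}PB_i}$), and finishing with the optional pillar plus one new-column cell at each extremity, i.e.\ the factor $\frac{q^2}{(1-tq)^2}$. The induction framing is superfluous (the identity is itself the recurrence and your step never uses the assumed formula for $B_{r-1}$), and the well-definedness of the slicing that you flag as the remaining obstacle is likewise only asserted, not verified, in the paper.
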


\paragraph{\bf Bubbles of arbitrary width} The task of finding a generating function $B(t,q,w)$ for bubbles of arbitrary width is made possible by the existence of  an elegant combinatorial factorization that appear in Figure~\ref{bubni} reminiscent of a factorization of Dyck paths (see for instance~\cite{Ge}). This figure informs us  that a bubble is either a minimal bubble or a stack of bubbles with pillars on top of an initial bubble terminated on each end by one possible pillar and a mandatory cell on a new column. 
\begin{figure}[htbp]
\begin{center}
\epsfig{file=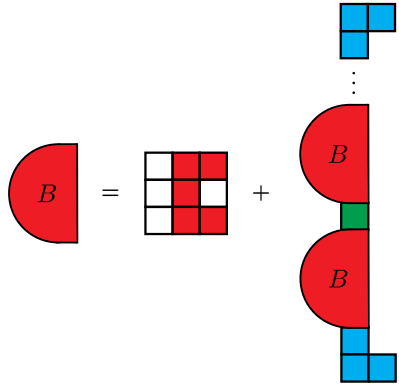,scale=1}
\caption{Combinatorial factorization of bubbles}
\label{bubni}
\end{center}
\end{figure}

This translates into the following equation where $B$ stands for $B(t,q,w)$. Recall that the variables $t$ and $q$ carry respectively the height and length statistics of the bubble. We introduce a new statistic called the horizontal half-perimeter which counts half the number of horizontal unitary cell edges on the perimeter of the bubble.  For example the bubble in Figure \ref{bubblesA} has horizontal half-perimeter equal to $3$. The half-perimeter statistic is known in the literature (see \cite{BmR}) and it is carried by the variable $w$ in $B$. 
\begin{align*}
B=&\frac{w^3t^3q^5}{1-tq} + \frac{wq}{1-tq} \frac{B}{1-PB} \frac{wq}{1-tq}.
\end{align*}
This equation  is easily solved to give the following expression:
\begin{align}\label{BNI}
& B=  \frac{1}{2tq(1-tq)} 
\left(1-2tq+w^3t^4q^6-w^2q^2+t^2q^2-\right.\\ \nonumber
&\left.\sqrt{(1+2wq -2tq + w^2q^2 +t^2q^2 -2wq^2t-w^3t^4q^6)(1-2wq-2tq +w^2q^2+t^2q^2+2wq^2t-w^3t^4q^6)}\right).
\end{align}
Setting $t=1$ and $w=1$ reduces equation ({\ref{BNI}}) to the length generating function $B(q)$:
\begin{align*}
 B(q)=  \frac{1}{2q} 
\left(1-q-q^2-q^3-q^4-q^5- \sqrt{(1+q)(1+q+q^2)(1-q+q^2)(1-q-q^2)(1-2q-q^3)}\right)
\end{align*}
This generating function, which is algebraic,  describes the sequence $A023422$ in the $OEIS$ bank of sequences. Bubbles snakes belong to the class of non-simple excursions in the lattice paths classification proposed in~\cite{BF}. This combinatorial factorization cannot handle the width parameter $r$. For that reason the generating function of bubbles of given width cannot be obtained with this method.\\
\paragraph{\bf Crossings} The generating function of crossings of width $2$ and $3$ are easy to establish (see Figures~\ref{crossing2} and~\ref{crossing3}) and are given by the following expressions  
 \begin{align*}
  C_2(t,q)&=tq^2,\\
    C_3(t,q)&=\frac{tq^3}{1-tq}=\frac{q}{1-tq}C_2(t,q). 
\end{align*}

Crossings of width $b=4$ (see Figure \ref{crossing4}) are constructed from bottom to top and left to right as follow. First there is a crossing of width $3$. On top of it an arbitrary number (possibly zero) of bubbles of width two with their pillar are stacked 
and then a possible final pillar and finally one cell in the fourth column. This gives
  \begin{align*}
  C_4(t,q)&=C_3(t,q)\left( \frac{1}{1-PB_2}\right)\frac{q}{1-tq}.
  \end{align*}
Let's factor the crossing $C_5$ to describe its generating function and obtain a better understanding of the general case. Moving from bottom to top, the first factor is a crossing of width $4$ (green part in Figure~\ref{crossing5}). On top of that crossing is a stack of an arbitrary number of bubbles with pillars of width $2$ or $3$ (red part in Figure~\ref{crossing5}). Finally the end comes with a possible pillar on the fourth column and a last cell in the fifth column (blue part in Figure~\ref{crossing5}). This factorization gives the following expression: 
\begin{align*}
  C_5(t,q)&=C_4(t,q)\left( \frac{1}{1-PB_2-PB_3}\right)\frac{q}{1-tq}.
\end{align*}
\begin{figure}[htbp]
\begin{center}
\subfigure[$b=2$]{\makebox[1.5cm]{\includegraphics[scale=0.8]{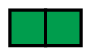}\label{crossing2}}}\qquad
\subfigure[$b=3$]{\makebox[1.5cm]{\includegraphics[scale=0.8]{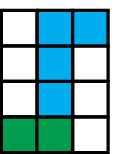}\label{crossing3}}}\qquad
\subfigure[$b=4$]{\makebox[1.5cm]{\includegraphics[scale=0.8]{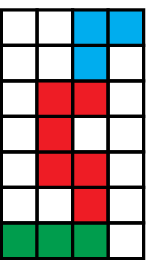}\label{crossing4}}}\qquad
\subfigure[$b=5$]{\includegraphics[scale=0.8]{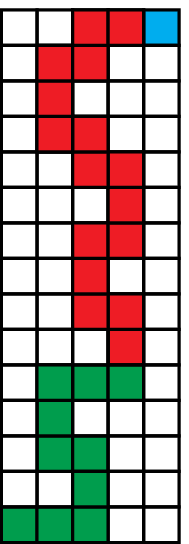}\label{crossing5}}\qquad
\caption{Crossings of width $b$}
\label{cros}
\end{center}
\end{figure}

For crossings of arbitrary width $b$, there is a similar factorization in three parts: $i)$ A crossing of width $b-1$ (green cells in Figure~\ref{cros}). $ii)$ A stack of bubbles of width $2$ to $b-2$ (red cells in Figure~\ref{cros}). $iii)$ A pillar in column $b-1$ and one cell in column $b$ (blue cells in Figure~\ref{cros}). We have proved the following result. 
\begin{proposition} \label{propcros1}For all $b\geq 3$ we have 
 \begin{align*}
C_b(t,q)&=C_{b-1}(t,q)\left( 
\frac{1} {1-\sum_{i=2}^{b-2}PB_i}\right) \frac{q}{1-tq}
&=   \left(\frac{q}{1-tq}\right)^{b-2}\frac{tq^2}{\prod_{j=2}^{b-2}\left( 1-\sum_{i=2}^{j}PB_i\right)}.
\end{align*}
\end{proposition}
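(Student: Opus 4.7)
My plan has two parts, matching the two equalities in the statement. I would first establish the recurrence (left equality) by formalizing the combinatorial factorization already sketched in the text preceding the proposition. The closed form (right equality) then follows by induction on $b\geq 3$, using the base case $C_2(t,q)=tq^2$ computed earlier.

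To prove the recurrence I would decompose a crossing $C\in\mathcal{C}(b,k,n)$ from the top down. The head $H$ of $C$ is the unique cell in column $b$, and its unique neighbor in the snake is a cell $Y$ in column $b-1$. Following the snake backward from $Y$, one identifies a maximal segment of cells in column $b-1$ lying above the rest of the snake; this is the top pillar (possibly of length zero once $Y$ is attributed to the structure below). Below this top pillar one finds, alternately, a stack of bubbles of widths between $2$ and $b-2$ with spine in column $b-1$ and connecting pillars; and, at the very bottom, a base crossing of width $b-1$ whose head lies in column $b-1$. Conversely, stacking these pieces from bottom to top reconstructs $C$ uniquely. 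Translating this bijection into generating functions gives
\begin{equation*}
C_b(t,q)=C_{b-1}(t,q)\cdot \frac{1}{1-\sum_{i=2}^{b-2}PB_i}\cdot \frac{q}{1-tq},
\end{equation*}
where the middle factor counts arbitrary-length sequences in $\mathcal{PB}_2\cup\cdots\cup\mathcal{PB}_{b-2}$ and the last factor encodes a possibly empty top pillar in column $b-1$ (contributing $\frac{1}{1-tq}$) topped by one cell in column $b$ (contributing $q$, with no new row).

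The closed form then follows by induction on $b\geq 3$. In the base case $b=3$, the empty sum and the empty product evaluate to $0$ and $1$, so the formula collapses to $\frac{q}{1-tq}\cdot tq^2=\frac{tq^3}{1-tq}$, which agrees with $C_3(t,q)$ as computed earlier. For the inductive step I would substitute the closed form for $C_{b-1}$ into the recurrence:
\begin{align*}
C_b &= \left(\frac{q}{1-tq}\right)^{b-3}\frac{tq^2}{\prod_{j=2}^{b-3}\bigl(1-\sum_{i=2}^{j}PB_i\bigr)}\cdot \frac{1}{1-\sum_{i=2}^{b-2}PB_i}\cdot \frac{q}{1-tq}\\
&= \left(\frac{q}{1-tq}\right)^{b-2}\frac{tq^2}{\prod_{j=2}^{b-2}\bigl(1-\sum_{i=2}^{j}PB_i\bigr)}.
\end{align*}

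The main obstacle is the rigorous justification of the top-down decomposition: showing that each extracted layer (the top pillar, each bubble-pillar unit, and the base crossing) is uniquely determined by $C$, and that distinct stackings produce distinct crossings. Two subtleties deserve particular care. First, the bubble widths are genuinely bounded by $b-2$: a bubble of width $b-1$ would reach column $1$ and place a second cell there, contradicting the defining property of a crossing. Second, the PDS constraint that an east step cannot be immediately followed by a west step (and vice versa) must be respected at every junction between a pillar and the bubble or crossing sitting above or below it; a pillar of length $\geq 1$ automatically supplies the required pair of north steps, so one needs to check that no junction reduces to a length-zero pillar where this constraint would fail.
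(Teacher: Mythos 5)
Your proposal is correct and takes essentially the same route as the paper: the identical factorization of a width-$b$ crossing into a width-$(b-1)$ crossing, a stack of pillar-plus-bubble units of widths $2$ through $b-2$ (the nonempty pillars being exactly what the PDS/no-cycle constraint forces below each bubble), and a final possibly empty pillar in column $b-1$ topped by one cell in column $b$, with the closed form obtained by iterating the recurrence down to $C_2=tq^2$. The only differences are presentational: you read the decomposition top-down and spell out the induction, whereas the paper builds the snake bottom-up (generalizing its explicit $C_4$ and $C_5$ constructions) and states the product form directly.
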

From propositions \ref{propbubl} and \ref{propcros1}  we obtain an expression for crossings in terms of bubbles :
\begin{proposition} For all integers $b\geq 3$ we have 

\begin{align*}
C_{2k+1}(t,q)=\frac{\prod_{i=1..k}B_{2i}} {\prod_{i=2..k}B_{2i-1}}\times \frac{1}{t^2q^2}\\
C_{2k}(t,q)=\frac{\prod_{i=2..k}B_{2i-1}} {\prod_{i=1..k-1}B_{2i}}\times \frac{(1-tq)} {t^2q^3}
\end{align*}
\end{proposition}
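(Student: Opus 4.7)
The plan is to combine the two recurrences provided by Propositions \ref{propbubl} and \ref{propcros1} into a single concise identity linking adjacent bubbles and crossings, and then to iterate it by induction on $k$.

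First, I will invert the crossings recurrence of Proposition \ref{propcros1}: rearranging $C_b/C_{b-1} = q/[(1-tq)(1-\sum_{i=2}^{b-2}PB_i)]$ gives
\begin{align*}
1-\sum_{i=2}^{b-2}PB_i = \frac{q\,C_{b-1}}{(1-tq)\,C_b}, \qquad b\geq 3.
\end{align*}
Substituting this expression, once with $b=r$ and once with $b=r+1$, into the bubble recurrence of Proposition \ref{propbubl} places
$\frac{q\,C_{r-1}}{(1-tq)C_r} \cdot \frac{q\,C_r}{(1-tq)C_{r+1}}$
in the denominator, so the middle $C_r$ cancels against its reciprocal and the remaining $(1-tq)^2/q^2$ cancels exactly against the $q^2/(1-tq)^2$ factor of the bubble recurrence. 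This produces the key identity
\begin{align*}
\frac{B_r}{B_{r-1}} = \frac{C_{r+1}}{C_{r-1}}, \qquad r\geq 3,
\end{align*}
equivalently $C_{r+1}=C_{r-1}\cdot B_r/B_{r-1}$.

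Because this recurrence advances the index by $2$, odd and even crossings are governed by independent telescoping chains that I will resolve by induction on $k$. For the odd case $b=2k+1$, the base case $k=1$ reduces to $C_3=B_2/(t^2q^2)$, which follows immediately from the closed forms $C_3=tq^3/(1-tq)$ and $B_2=t^3q^5/(1-tq)$ already in the text; the inductive step is simply $C_{2k+1}=C_{2k-1}\cdot B_{2k}/B_{2k-1}$, which extends the product in exactly the prescribed way. For the even case $b=2k$ with $k\geq 2$, the base case is $C_4=B_3(1-tq)/(t^2q^3)$, verified by a short computation combining the explicit formula for $B_3$ with one step of the crossings recurrence $C_4=C_3/(1-PB_2)\cdot q/(1-tq)$; the inductive step is analogous.

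The only substantive step is the derivation of the identity $B_r/B_{r-1}=C_{r+1}/C_{r-1}$: the cancellations happen cleanly, but one must carefully track the two substitutions, the intermediate $C_r$ that telescopes, and the $q^2/(1-tq)^2$ that absorbs the inverted factors. Once that identity is in hand, the remainder of the argument is routine induction, and the main care required is the bookkeeping of the product index ranges appearing in the statement.
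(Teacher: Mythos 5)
Your derivation is correct and is essentially the argument the paper intends: the paper gives no written proof beyond the remark that the result follows from Propositions~\ref{propbubl} and~\ref{propcros1}, and your way of making that explicit --- inverting the crossing recurrence to get $1-\sum_{i=2}^{b-2}PB_i=\frac{q\,C_{b-1}}{(1-tq)\,C_b}$ for $b\geq 3$, substituting twice into the bubble recurrence so that everything cancels down to $B_r/B_{r-1}=C_{r+1}/C_{r-1}$, and then telescoping the odd and even chains from the explicit small cases --- is the natural route, and your key identity and odd case check out.

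One point you must address, however: your even base case does not match the statement as printed. You verify $C_4=B_3\,\frac{1-tq}{t^2q^3}$, which is indeed true; but at $k=2$ the displayed formula reads $C_4=\frac{B_3}{B_2}\cdot\frac{1-tq}{t^2q^3}$, because $\prod_{i=1..k-1}B_{2i}=B_2$ is not an empty product, and that version is false (its leading term even carries negative powers of $t$). Carried through your induction $C_{2k}=C_{2k-2}\,B_{2k-1}/B_{2k-2}$, what you actually prove is
\begin{align*}
C_{2k}(t,q)=\frac{\prod_{i=2}^{k}B_{2i-1}}{\prod_{i=2}^{k-1}B_{2i}}\times\frac{1-tq}{t^2q^3}
=\frac{\prod_{i=2}^{k}B_{2i-1}}{\prod_{i=1}^{k-1}B_{2i}}\times tq^2,
\end{align*}
using $B_2\,\frac{1-tq}{t^2q^3}=tq^2$. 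So the even formula in the statement is off by the factor $B_2$ (either the denominator product should start at $i=2$, or the prefactor should be $tq^2$), and your proof silently establishes the corrected version rather than the printed one. The odd case is consistent: $C_3=B_2/(t^2q^2)$ agrees with the formula at $k=1$ under the empty product convention. State this discrepancy explicitly instead of letting your base case drift away from the formula you claim to prove.
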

\paragraph{\bf Endings} The most elementary non bubble or non crossing end of a $PDS$ is a pillar. To fix ideas, let's number the columns of the rectangle from left to right. As a warm up, we construct the generating function $E_b^2(t,q)$ of ending  snakes in a rectangle of width $b$ with the last cell on the second column (see Figure~\ref{famEnd}) by stacking its parts upwards. 

  We start with a crossing of width two (green cells in Figure~\ref{endA}). The second part is a possible pillar (blue cells in Figure~\ref{endA}). The third part is a possible stack of bubbles (red cells in Figure~\ref{endA}) of variable width of value at most $b-2$.  But the first cell of the first bubble can be the last cell of the initial crossing which means that we divide the generating function of the first bubble by $tq$. Finally the fourth part is a possible final pillar on top of the last bubble (last blue cell in Figure~\ref{endA}). With this factorization we obtain
 \begin{align*}
 E_b^2(t,q)
  &=tq^2 \times \frac{1}{1-tq}\times \left( 1+ \frac{\sum_{i=2}^{b-2}B_i} {tq(1-\sum_{i=2}^{b-2}PB_i)}
  \times \frac{1}{1-tq}\right).
  \end{align*}
  
Now lets turn to the general case $E_b^c(t,q)$ where the ending finishes on column $c$ with $1<c<b$.  We are going to factor the snake from left to right and bottom to top. As in $E_2$, we start with a horizontal row of two  cells (green cells in Figure~\ref{endB}). The next factor is the move from column two to column three  made of a possible pillar 
(first blue pillar in Figure~\ref{endB})  followed by one of the two possible patterns: $i)$ A unique cell in the third column $ii)$ At least one bubble on the second column followed by a pillar of height at least two and a cell in the third column. The first bubble may start with the last cell of the previous factor. 
 This gives 
  \begin{align}
  \label{fact1}
 \mbox{First factor:}\; \frac{tq^2}{1-tq} \left( q+\frac{\sum_{i=2}^{b-2}B_i} {(1-\sum_{i=2}^{b-2}PB_i)}\frac{t^2q^3}{tq(1-tq)}
  \right).
  \end{align}
 
 The second factor starts at the end of the first factor and  ends with the first cell on the fourth column immediately after the last visit on the third column.
It describes the part of the snake from column three to column four, as shown in Figure~\ref{endC}, where the black cells belong to the previous factor. 
This gives 
\begin{align}
\label{fact2}
 \mbox{Second factor:}\; \frac{1}{1-tq} \left(q+\frac{\sum_{i=2}^{b-3}B_i} {(1-\sum_{i=2}^{b-3}PB_i)}\frac{t^2q^3}{tq(1-tq)}
  \right).
  \end{align}
 
Each factor is constructed similarly, describing the move from one column to the next. There is a total of $c-1$ factors. The last factor is slightly different due to the fact that the snake does not move to the next column but starts and terminates on column $c$, as in Figure~\ref{endD} where the black cells belong to the previous factor. 
 \begin{align}
 \label{factc}
 \mbox{Last factor:}\; \left( 1+\frac{\sum_{i=2}^{b-c}B_i} {(1-\sum_{i=2}^{b-c}PB_i)tq(1-tq)} \right)
 \frac{1}{1-tq}.
  \end{align}
 \begin{figure}[h!]
\begin{center}
\subfigure[$E_b^2$]{\includegraphics[scale=0.73]{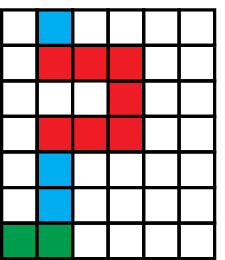}\label{endA}}\qquad
\subfigure[First factor of $E_b^c$]{\includegraphics[scale=0.73]{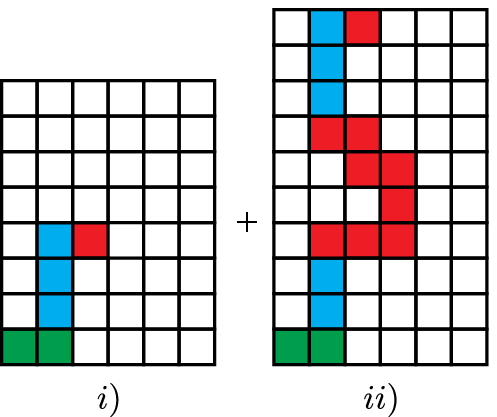}\label{endB}}\qquad
\subfigure[Second factor of $E_b^c$]{\includegraphics[scale=0.73]{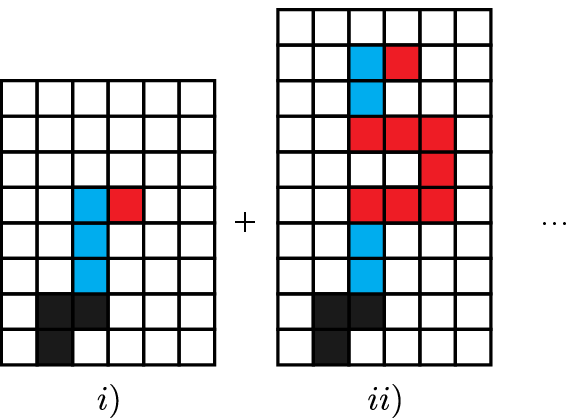}\label{endC}}\quad
\subfigure[Last factor of $E_b^c$]{\includegraphics[scale=0.73]{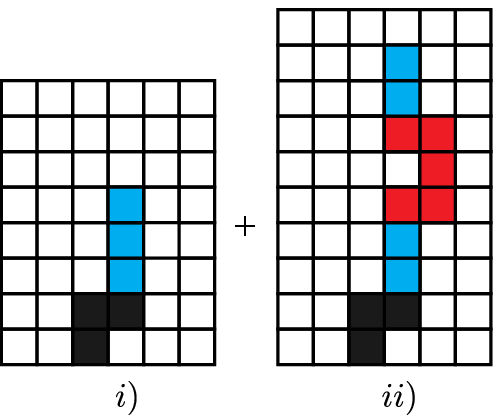}\label{endD}}
\caption{Endings}
\label{end}
\end{center}
\end{figure}
\allowdisplaybreaks 
From equations~\eqref{fact1}, \eqref{fact2} and~\eqref{factc} and the multiplication principle, we deduce the following expression.
\begin{proposition} For all integers $b$ and $c$ such that $1<c<b$, the generating function for the ending snakes finishing on column c in a  rectangle of width $b$ is the following  
\begin{align*}
E_b^c(t,q)&=\frac{tq^c}{(1-tq)^{c-1}}\times  \frac{1}{\prod_{j=0}^{c-3} \left(1-\sum_{i=2}^{b-2-j}PB_i\right)}\times 
\left(1+ \frac{\sum_{i=2}^{b-c}PB_i}{t^2q^2\left(1-\sum_{i=2}^{b-c}PB_i\right)} \right). 
\end{align*}
\end{proposition}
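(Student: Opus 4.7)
The plan is to extend the three factor computations given in equations~\eqref{fact1}, \eqref{fact2} and~\eqref{factc} to a full column-by-column factorization of any ending snake $E \in \mathcal{E}(b,k,n)$ finishing on column $c$, and then to combine them by the multiplication principle. Reading $E$ from bottom to top and left to right, every such snake admits a unique decomposition into $c-1$ successive factors: an \emph{initial factor} made of the horizontal crossing of columns $1$ and $2$ together with everything up to the first cell placed on column $3$; a sequence of $c-3$ \emph{intermediate factors}, where the $j$-th factor (for $j=1,\ldots,c-3$) begins at the last cell on column $j+1$ and ends at the first cell on column $j+2$; and a \emph{terminal factor} consisting of all cells lying on column $c$ after its first visit. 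Uniqueness follows from the ``first-arrival'' rule at each successive column, which provides a deterministic splitting.

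The initial and terminal factors are already given by~\eqref{fact1} and~\eqref{factc}, and the $j$-th intermediate factor follows the exact template of~\eqref{fact2} after replacing the upper summation bound $b-3$ by $b-2-j$: indeed, a bubble attached at column $j+1$ may extend rightward to at most column $b-1$ without touching the rectangle's right side, which forces its width $i$ to satisfy $2 \le i \le b-2-j$. Applying the identity $B_i = \frac{1-tq}{tq}\,PB_i$, which follows from $PB_i = P\cdot B_i$ with $P = tq/(1-tq)$, collapses each bracketed factor $q + \frac{\sum B_i}{1-\sum PB_i}\cdot\frac{t^2q^3}{tq(1-tq)}$ to the compact form $\frac{q}{1-\sum PB_i}$. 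Hence the initial factor becomes $\dfrac{tq^3}{(1-tq)\bigl(1-\sum_{i=2}^{b-2} PB_i\bigr)}$ and the $j$-th intermediate factor becomes $\dfrac{q}{(1-tq)\bigl(1-\sum_{i=2}^{b-2-j} PB_i\bigr)}$, while the terminal factor simplifies to $\dfrac{1}{1-tq}\bigl(1 + \tfrac{\sum_{i=2}^{b-c} PB_i}{t^2q^2\,(1-\sum_{i=2}^{b-c} PB_i)}\bigr)$.

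Multiplying the $c-1$ factors together then yields $E_b^c(t,q)$: the powers of $q$ accumulate to $q^c$ (namely $q^3$ from the initial factor and one $q$ from each of the $c-3$ intermediate factors), the factors $(1-tq)^{-1}$ combine to $(1-tq)^{-(c-1)}$, and the bubble denominators collect into $\prod_{j=0}^{c-3}\bigl(1-\sum_{i=2}^{b-2-j} PB_i\bigr)$. The edge case $c=2$ matches the separately derived expression for $E_b^2(t,q)$ via the empty-product convention $\prod_{j=0}^{-1} = 1$. The main obstacle is verifying the unique factorization at the boundaries between consecutive factors --- specifically, ensuring that each cell of $E$ is counted exactly once under concatenation, which is what forces the division by $tq$ whenever the tail of a bubble shares a cell with the top of the preceding pillar (as is already accounted for in~\eqref{fact1}--\eqref{factc}).
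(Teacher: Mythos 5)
Your argument is essentially the paper's own proof: the same column-by-column factorization of an ending snake into the initial factor~\eqref{fact1}, the $c-3$ intermediate factors obtained from the template~\eqref{fact2} with upper summation bound $b-2-j$, and the terminal factor~\eqref{factc}, assembled by the multiplication principle into $c-1$ factors. Your explicit simplification of each bracketed factor via $B_i=\frac{1-tq}{tq}\,PB_i$ to $\frac{q}{1-\sum PB_i}$ (and of the last factor to the $\frac{\sum PB_i}{t^2q^2(1-\sum PB_i)}$ form) is correct and merely makes explicit the algebra the paper leaves to the reader, so the product indeed yields the stated formula, including the $c=2$ edge case.
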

Finally the generating function $E_b(t,q)$ of the endings in a rectangle of width $b$ is the sum of the generating functions $E_b^c(t,q)$ for all columns $c$ from $c=2$ to $c=b-1$. The case $c=1$ being a pillar, we have $E_1(t,q)=\frac{1}{1-tq}$ which will be incorporated later in the families $\mathcal{F}_i$.
\begin{proposition}  The generating function $E_b(t,q)$ for the ending snakes  in a $b\times k$ rectangle is the following:
\begin{align*}
E_b(t,q)&=\sum_{c=2}^{b-1} E_b^c(t,q)\\
&= \sum_{c=2}^{b-1} \frac{tq^c}{(1-tq)^{c-1}}\times  \frac{1}{\prod_{j=0}^{c-3} \left(1-\sum_{i=2}^{b-2-j}PB_i\right)}\times 
\left(1+ \frac{\sum_{i=2}^{b-c}PB_i}{t^2q^2\left(1-\sum_{i=2}^{b-c}PB_i\right)} \right).
\end{align*}
\end{proposition}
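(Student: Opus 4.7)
The plan is to observe that this proposition is essentially an immediate corollary of the preceding one via a disjoint-union decomposition on a simple statistic. Specifically, every ending snake in a $b\times k$ rectangle has a well-defined ``terminal column'' $c$, namely the column containing its head (the unique cell in one of the two designated end-columns is its tail; the head lies somewhere in columns $1,2,\ldots,b$). I would first argue that the set $\mathcal{E}(b,k,n)$ of endings partitions as a disjoint union over $c$ of the subsets $\mathcal{E}_b^c$, indexed by the terminal column.

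Next I would pin down the range of $c$. By the definition of ending given in section~\ref{Sec4.1}, the tail of an ending is the unique cell in one of the side columns; by convention (established for $E_b^2$ through $E_b^c$ in the preceding propositions) this side is the left, so column $1$ contains exactly one cell. Consequently the head cannot also lie in column $1$, ruling out $c=1$. On the other hand, if the head were in column $b$, then both side columns would contain a cell of the snake, making it an inscribed snake with exactly one cell on each side — that is, a crossing, not an ending. Hence $c$ ranges over $\{2,3,\ldots,b-1\}$, and the case $c=1$ (a pure pillar) is intentionally excluded here and, as remarked, absorbed into the families $\mathcal{F}_i$ through the separate pillar generating function.

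The final step is to invoke the additivity of generating functions on a disjoint union of classes:
\begin{align*}
E_b(t,q) \;=\; \sum_{c=2}^{b-1} E_b^c(t,q),
\end{align*}
and then substitute the closed form for $E_b^c(t,q)$ supplied by the previous proposition. This yields the stated double-product expression verbatim.

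There is no real obstacle: the preceding proposition already did the combinatorial work of factoring a snake ending in column $c$ into its column-by-column moves, and the only ingredient missing here is the index set for $c$. The mildly delicate point — one I would state explicitly to avoid confusion — is why $c=1$ is omitted: a ``pillar ending'' is degenerate (the tail equals the head up to the pillar itself), is already counted by $P(t,q)$, and is reintroduced when the families $\mathcal{F}_1,\ldots,\mathcal{F}_4$ are assembled. Once this bookkeeping convention is clear, the proposition follows by summation.
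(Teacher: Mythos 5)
Your proposal is correct and follows essentially the same route as the paper: the paper obtains $E_b(t,q)$ simply by summing the generating functions $E_b^c(t,q)$ of the preceding proposition over the terminal column $c=2,\ldots,b-1$, with the degenerate case $c=1$ (a pillar, $E_1(t,q)=\frac{1}{1-tq}$) set aside to be incorporated later into the families $\mathcal{F}_i$, exactly as you explain. Your explicit justification of the index range (excluding $c=1$ and $c=b$) is a small clarifying addition but not a different argument.
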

\paragraph{\bf Generating functions for inscribed {\em PDS}}
We are now in a position to present  three-variable generating functions $F_i(s,t,q)$ for the families of inscribed $PDS$ with the structure given by equations~\eqref{f1} to~\eqref{f4}.  We obtain
\begin{align*}\allowdisplaybreaks
F_1(s,t,q)&=2\sum_{b\geq 2} s^b \left[ [1+P(t,q)(1+E_b(t,q))]^2\times \frac{ P(t,q)C_b(t,q)^2}{\left[1-P(t,q)(C_b(t,q)+\sum_{i=2}^{b-1}B_i(t,q))\right]}
\right]\\
F_2(s,t,q)&=4 \sum_{b\geq 2} s^b \left[ [1+P(t,q)(1+E_b(t,q))]^2\times 
\frac{ P(t,q)C_b(t,q)\sum_{i=2}^{b-1}B_i(t,q)}{\left[1-P(t,q)(C_b(t,q)+\sum_{i=2}^{b-1}B_i(t,q))\right]}
\right]\\
F_3(s,t,q)&=2\sum_{b\geq 2} s^b \left[ [1+P(t,q)(1+E_b(t,q))]^2\times 
\frac{ P(t,q)\sum_{i=2}^{b-1}B_i(t,q)}{\left[1-P(t,q)\sum_{i=2}^{b-1}B_i(t,q)\right]}\times \right.\\
 &\hspace{6.5cm}\left.\frac{ P(t,q)C_b(t,q)\sum_{i=2}^{b-1}B_i(t,q)}{\left[1-P(t,q)(C_b(t,q)+\sum_{i=2}^{b-1}B_i(t,q))\right]}
\right]\notag\\
F_4(s,t,q)&=\sum_{b\geq 2} 2s^b \left[ [1+P(t,q)(1+E_b(t,q))]^2\times C_b(t,q)
\right] -tq^b
\end{align*}

These four last expressions are not rational but, together with Proposition~\ref{proppart}, they allow the production of a two-variable rational expression for each value of the width $b$.


\section{A Bijection}
\label{bij}
	Bargraphs are a class of polyominoes that has received the interest of combinatorists who provided their generating functions according to different parameters  (see~\cite{BmR}).  Bargraphs are defined as column-convex polyominoes with their lower edge lying on the horizontal axis. They are uniquely characterized by the height of their columns.  Looking at bargraphs and bubble snakes simultaneously, we are compelled to guess that bubbles are contours of bargraphs.  Indeed there exists a bijection between these two sets of objects that we describe in  the present section. 

In order to uniformize our presentation, we apply on bargraphs a counterclockwise rotation of 90 degrees. So from now on, we adapt our notation to the rotated bargraphs. A bargraph has width $h$ when  its longest horizontal row has length $h$ (see figure \ref{Bargraph}). Given a bargraph, the corresponding bubble will be the set of cells on the perimeter of the bargraph with  side or corner contact with the restriction that the bubble has no cell on the right side of the bargraph. We can thus see a bubble as the envelop of a bargraph. But before envelopping a bargraph, we need to stretch its wells. 
	
A $well$ of height $k$ is a part of a bargraph formed by two rows of length $h_0$ and $h_2$ such that all the $k$ rows between them are of the same length $h_1$ with $h_0> h_1 < h_2$. For example the bargraph in  figure~\ref{Bargraph} has a well of height $k=2$.
Bargraphs having wells of height $k< 3$  cannot be directly enveloped in a bubble.  So we increase  by two the height of each well in a bargraph to guarantee that all wells have height at least $3$ and then we envelop the bargraph with a unique bubble as explained above.
\begin{proposition}
	\upshape There is a bijection $f: bargraph(h-1) \rightarrow bubble(h)$ from the set of bargraphs of width $h-1$ to the set of bubbles of width $h$. 
\end{proposition}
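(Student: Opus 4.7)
The plan is to construct $f$ and its candidate inverse explicitly, then verify they are mutual inverses. Given a bargraph $B$ of width $h-1$, I would first apply a \emph{well-stretching} operator: for every well of $B$ of height $k\ge 1$, insert two additional copies of its middle row, producing an intermediate bargraph $\tilde B$ in which every well has height at least three. I would then draw the contour of $\tilde B$ on the three sides opposite to its base, placing a cell at every lattice square that lies outside $\tilde B$ but shares either an edge or a corner with $\tilde B$. Declaring the two extremities of the resulting strip to be the tail and the head, I obtain a candidate bubble $S=f(B)$.

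To verify that $S$ is genuinely a bubble of width $h$, I would check four properties: $S$ is edge-connected, every cell of $S$ has edge-degree at most two, $S$ contains no cycle, and its head and tail lie in the same column adjacent to the appropriate vertical side of the enclosing rectangle while no cell of $S$ lies on the opposite side. The degree condition is the delicate one and is precisely what the well-stretching step buys us: around a well of height $k$ in $\tilde B$ the contour performs $k-1$ consecutive north steps between an east run and a subsequent west run (or vice versa), and the partially directed constraint demands $k-1\ge 2$, i.e.\ $k\ge 3$. This inequality holds for every well of $\tilde B$ by construction.

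For the inverse direction, starting from a bubble $S$ of width $h$, I would extract the region $\tilde B$ consisting of all lattice cells strictly enclosed by $S$ on its three non-tail/head sides. By reading the maximal horizontal runs of $S$ from bottom to top and using the partially directed property of $S$, I would show that $\tilde B$ is a bargraph of width $h-1$ each of whose wells has height at least three. Contracting every well of $\tilde B$ by exactly two rows then produces a bargraph $B=f^{-1}(S)$ of width $h-1$ with unrestricted well heights. The identities $f^{-1}\circ f=\mathrm{id}$ and $f\circ f^{-1}=\mathrm{id}$ follow by a column-by-column local inspection, since both the stretching and the envelope operations are strictly local.

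The main obstacle is the careful verification of the degree-two property of the contour. One must guarantee that the corner-contact cells of the envelope never meet adjacent contour cells on a second edge, which would raise some cell's degree to three and destroy the snake property. This is exactly what the preliminary stretching of every well to height at least three ensures, and turning the visual intuition of the figures into a rigorous local case analysis around each peak and each well of $\tilde B$ is the step on which the whole bijection rests.
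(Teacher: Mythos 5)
Your construction is essentially the paper's own proof: stretch every well by two rows so that all wells have height at least three, take the edge-and-corner envelope to obtain the bubble of width $h$, and invert by taking the interior of a bubble and removing two rows from each well, with injectivity and surjectivity following from this two-step construction. Your remark that a well of height $k$ forces $k-1\ge 2$ consecutive north steps between an east run and a west run (hence $k\ge 3$) correctly justifies the height-three threshold that the paper merely asserts.
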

\begin{proof}
Let us consider a bubble. The bargraph sent to this bubble is the interior of the latter from which we remove two rows in each well. So $f$ is surjective. Given a bargraph the corresponding bubble is the envelop of the bargraph after the addition of two rows in each well (see figures~\ref{Sbargraph} and~\ref{Bubble}). The two-step construction of $f$ garantees that  distinct bargraphs are sent by $f$ to distinct bubbles
so that $f$ is injective and thus bijective.
   \end{proof}
\begin{figure}[h!]\centering
	\subfigure[Bargraph]{\makebox[3cm]{\includegraphics[scale=0.8]{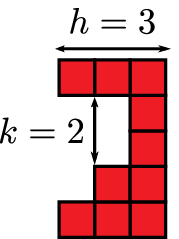} \label{Bargraph}}}\quad
	\subfigure[Stretched bargraph]{\includegraphics[scale=0.8]{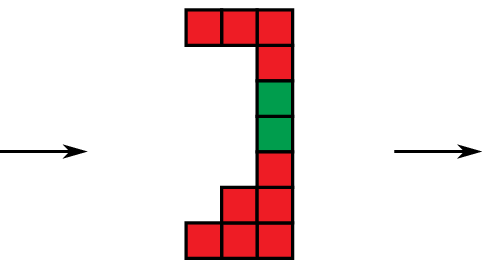} \label{Sbargraph}}
	\subfigure[Bubble]{\makebox[2.6cm]{\includegraphics[scale=0.8]{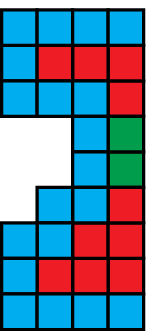} \label{Bubble}}}
	\caption{Bijection $f:~bargraph(h-1) \rightarrow bubble(h)$}
\end{figure}


\section{Inscribed snakes of maximal length}
\label{conj}
In the course of our investigations of snake polyominoes, we have been interested in inscribed snakes of maximal length. To this end, partially directed snakes and kiss-free snakes have been studied. We define a {\em  kiss-free snake} as a snake polyomino  
such that there exists no  $2\times 2$ rectangle with two cells of the snake in one of its diagonals and two empty cells in its other diagonal (see Figure \ref{Maximal_kissfree}). 

It is straightforward to obtain the value of the maximal length of $PDS$ inscribed in a rectangle of size 
$b\times k$ which is 
$ \left\lfloor \frac{(b+1)(k+1)}{2}\right\rfloor-1$, as illustrated on figure \ref{Maximal_PDS1}. 
In an earlier version of this text, we conjectured that this length is also the maximal length for any inscribed kiss-free snake, as shown on figure \ref{Maximal_kissfree}. This fact which is the content of the following conjecture has recently been generalized to tree polyominoes and proved by a different group of authors  (see \cite{BGL}).
\begin{conjecture}
The maximal length of kiss-free snakes inscribed in a $b\times k$ rectangle is
\begin{align}\label{maxl}
 \left\lfloor \frac{(b+1)(k+1)}{2}\right\rfloor-1.
\end{align}
\end{conjecture}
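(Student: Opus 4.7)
Both directions of the equality can be attacked. The lower bound is essentially established already by the paragraph preceding the conjecture in the excerpt: the explicit partially directed zigzag inscribed in $R = b \times k$ (alternating full rows left-to-right and right-to-left, linked by single-cell connectors) realizes the claimed length, and any PDS is automatically kiss-free---a hypothetical kiss between two diagonally placed snake cells would require both intermediate cells of the surrounding $2\times 2$ to be empty, but since PDS moves are limited to N, E, W, the snake path joining those two cells is forced to visit one of those intermediate cells. So I focus on the upper bound.

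The plan for the upper bound is to apply Pick's theorem to $\partial P$. Let $P$ be a kiss-free snake of length $n$ inscribed in $R$. First, since the cell-adjacency graph of $P$ is a path, it contains no cycle; in particular no $2\times 2$ block of cells can lie in $P$, and no empty region can be enclosed by $P$ (such a region would be encircled by a cycle of cells). This shows both that the number of lattice points strictly inside $P$ satisfies $I(P)=0$ and that $P$ is simply connected. Second, at each lattice vertex $v$ let $s(v)$ be the number of the four surrounding cells that lie in $P$. The snake hypothesis forbids $s(v)=4$, while the kiss-free hypothesis forbids the case $s(v)=2$ in which the two snake cells sit on a diagonal. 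The remaining possibilities ($s(v)\in\{0,1,3\}$, or $s(v)=2$ with the two snake cells edge-adjacent) each correspond to $\partial P$ either bypassing $v$, making a single $90^\circ$ convex corner, a single $270^\circ$ concave corner, or passing straight through. Combined with simple connectedness, this forces $\partial P$ to be a single simple closed lattice polygon.

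Pick's theorem applied to $\partial P$, with area $A=n$ and interior-lattice-point count $I=0$, then gives $B(\partial P) = 2n + 2$ boundary lattice points. Every such point is a corner of some cell of $P$ and is therefore a lattice point of $R$; since $R$ contains exactly $(b+1)(k+1)$ lattice points, we obtain
\[
2n + 2 \;\le\; (b+1)(k+1).
\]
Because $n$ is an integer, this rearranges to the desired bound $n \le \lfloor (b+1)(k+1)/2\rfloor - 1$, matching the lower bound.

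The conceptual heart of the argument is the simultaneous role of the two hypotheses in the middle step: the snake (path) condition rules out both $s(v)=4$ and enclosed holes, while the kiss-free condition rules out exactly the $s(v)=2$ configurations with the two snake cells on a diagonal, together making $\partial P$ a simple polygon to which Pick's theorem applies. The one subtlety worth checking carefully is that no kiss can occur at a vertex on $\partial R$: there, two of the four surrounding cells lie outside $R$ and hence are not in $P$, so a diagonal kiss cannot arise at the boundary of $R$. Once this local case analysis is in place, the remaining counting argument fits in a single line.
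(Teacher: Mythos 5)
First, a framing remark: the paper does not prove this statement at all --- it is stated as a conjecture, reported as settled only in the later work \cite{BGL} --- so your attempt stands on its own, and your idea of getting the upper bound from Pick's theorem is attractive and essentially sound. The genuine problem is your lower bound. You take achievability from the partially directed zigzag, but the zigzag --- indeed any $PDS$, in any of the four axis orientations --- cannot reach $\lfloor (b+1)(k+1)/2\rfloor-1$ when $b$ and $k$ are both even and at least $4$. In a north-directed $PDS$ the cells of each row form an interval $I_i$, and consecutive intervals must share exactly one column (two shared columns force a $2\times 2$ block of cells, hence a cycle), so $|I_i|+|I_{i+1}|\le b+1$ and an inscribed $PDS$ has at most $\tfrac{k}{2}(b+1)$ cells when $k$ is even. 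For $b=k=4$ this gives at most $10$ cells, while the conjectured maximum is $11$; that value is attained, but only by kiss-free snakes that are not partially directed, e.g.\ the $11$-cell snake with cells $(1,1),(2,1),(3,1),(4,1),(4,2),(4,3),(4,4),(3,4),(2,4),(1,4),(1,3)$. (The sentence of the paper you lean on is itself inaccurate in exactly these cases.) So the achievability half of the statement needs a genuinely different construction when $b$ and $k$ are both even, and your proof does not supply it.

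There is also a gap in the upper bound, at the step ``no cycle $\Rightarrow$ no enclosed empty region, because such a region would be encircled by a cycle of cells''. That implication is false for snakes in general: the seven cells $(1,2),(1,3),(2,3),(3,3),(3,2),(3,1),(2,1)$ form a path (no cycle in the adjacency graph) yet enclose the empty cell $(2,2)$; the surrounding ring is closed up only through the corner contact --- a kiss --- between $(2,1)$ and $(1,2)$. So hole-freeness, exactly like the exclusion of pinch points in your $s(v)$ analysis, requires the kiss-free hypothesis: around a putative hole, any diagonal contact between two snake cells has, by kiss-freeness, a third occupied cell in the same $2\times 2$ block, so the cells bordering the hole produce a closed walk in the edge-adjacency graph winding around a point of the hole, which is impossible when that graph is a tree (its embedding through cell centres lies inside the polyomino, so every closed walk has winding number zero about the hole). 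Once this is repaired the rest of your argument is fine: $I=0$ because an interior lattice point would force a $2\times2$ block, $\partial P$ is a simple lattice polygon, and Pick's theorem gives $n=B/2-1\le (b+1)(k+1)/2-1$, hence $n\le\lfloor (b+1)(k+1)/2\rfloor-1$. In short: the upper bound is correct modulo the hole argument, but the claimed equality is not proved because the lower bound fails in the even-by-even case.
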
 
\begin{figure}[h!]
\centering
\subfigure[{\em PDS}]{\includegraphics[scale=0.8]{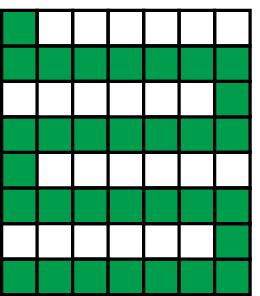}\label{Maximal_PDS1}}\qquad
\subfigure[kiss-free]{\includegraphics[scale=0.8]{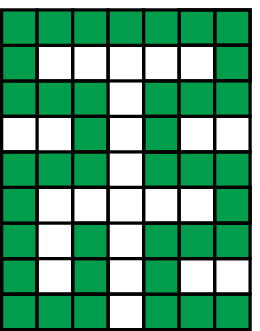}\label{Maximal_kissfree}}\qquad
\subfigure[general snake]{\makebox[2.5cm]{\includegraphics[scale=0.8] {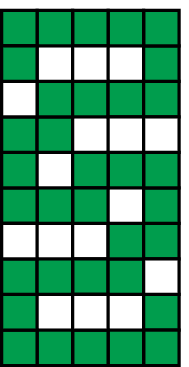}\label{Max_uniqB}}}
\caption{Inscribed snakes of maximal length}
\end{figure}

In other words, when we relax the condition of being $PDS$, there is no gain when the goal is to fill a rectangle with the longest snake as long as the snake remains kiss-free. For arbitrary  snakes this conjecture is not true: figure~\ref{Max_uniqB} shows a snake with length exceeding expression~\eqref{maxl}. We could not conjecture an exact expression for 
the maximal length of inscribed arbitrary snakes in terms of $b$ and $k$. 
A. Goupil's work  was supported by  FODAR.  M.-E. Pellerin is grateful to NSERC for a graduate scholarship. The three authors thank H. Lessard for the conception and implementation of  computer programs that generate inscribed snakes.



\begin{thebibliography}{99}

\bibitem{BF} C. Banderier and P. Flajolet, {\em Basic analytic combinatorics of Directed Lattice paths}, Theoret. Comp. Sci., 281(1-2), 37-80, 2002. Selected paper in honour of Maurice Nivat. 

\bibitem{BGL} A. Blondin Mass\'e, A. Goupil and M. Lapointe,{\em Tree and snake polyominoes of maximal area}, to be submitted. 

\bibitem{Bm} M. Bousquet-M\'elou,  {\em Families of prudent self-avoiding walks},  J. of Comb. Theory, Series A, 117,  313-344, 2010.

\bibitem{BmR} M. Bousquet-M\'elou and A. Rechnitzer, {\em The site perimeter of bargraphs},  Adv. in Applied Math.,  vol. 31, no 1, 86-112, 2003.

\bibitem{Benjamin2003proofs} A. T. Benjamin and J. J. Quinn, {\em Proofs that Really Count: The Art of Combinatorial Proof}, Dolciani Mathematical Expositions, Mathematical Association of America, 2003.

\bibitem{Ge} I. M. Gessel, {\em An introduction to lattice path enumeration}, tutorial monograph, 2007.

\bibitem{GC} A. Goupil and H. Cloutier, {\em Enumeration of 3D polyominoes inscribed in a rectangular prism}, DMTCS proc. FPSAC 2011, 2012. 

\bibitem{GCN} A. Goupil, H. Cloutier and F. Nouboud, {\em Enumeration of polyominoes inscribed in a rectangle}, Discrete Appl. math., vol 158(18), 2014-2023, 2010. 

\bibitem{GCP} A. Goupil, H. Cloutier and M.E. Pellerin, {\em Generating functions for inscribed polyominoes}, Discrete Appl. math., vol 161(1-2), 151-166, 2013.

\bibitem{OEIS} J. Myers, \url{https://oeis.org/A182644}, 2010.

\bibitem{St} R. P. Stanley, {\em Enumerative Combinatorics, vol. 1}, Cambridge Stud. Adv. Math., vol 49, Cambridge University Press, Cambridge, 1997. 

\bibitem{Wilf1994} H. S. Wilf, {\em generatingfunctionology}, internet edition, Academic Press Inc, 1994.

\end{thebibliography}
\end{document}